%% file: main.tex
\documentclass[10pt,reqno]{amsart}
\usepackage[utf8]{inputenc}
 \usepackage{float}
 \usepackage{multicol}
\usepackage{fullpage, tikz-cd} % add this since it gets rid of the super large margins
\usepackage{graphicx}
\usepackage{enumitem}
\usepackage{amsmath,amssymb,amsthm, amsfonts,verbatim, pdfpages, wrapfig, blindtext,mathtools}
\usepackage[colorlinks=true,hyperindex, linkcolor=blue, pagebackref=false, citecolor=cyan]{hyperref}
\usepackage{tikz,xcolor}
\usepackage{cleveref}
\usepackage{blkarray}
\usepackage{multirow, comment}
\newcommand{\bigzero}{\mbox{\normalfont\Large\bfseries 0}}
\newcommand{\rvline}{\hspace*{-\arraycolsep}\vline\hspace*{-\arraycolsep}}

\usepackage{nicematrix}
\usepackage{tikz}
\usetikzlibrary{fit}
\usetikzlibrary{shapes.geometric}

\newcommand{\Z}{\mathbb{Z}}

\DeclareMathOperator{\Cone}{Cone}

\DeclareMathOperator{\coker}{coker}
\DeclareMathOperator{\im}{im}
\DeclareMathOperator{\diag}{diag}

\def\onesvector{\mathbf{1}}
\def\zerovector{\mathbf{0}}

\newcommand{\CT}[2]{\mathrm{CT}(#1,#2)}

\newcommand{\Coneplus}[1]{\mathrm{Cone}^{(+#1)}}

        \usepackage{mathtools}

\usepackage{graphicx}

\newtheorem{theorem}{Theorem}[section]
\newtheorem{lemma}[theorem]{Lemma}

\author{
  Dorian Smith \textsuperscript{\textdagger}}
\thanks{\textsuperscript{\textdagger} University of Minnesota Twin Cities,
Department of Mathematics, smi01055@umn.edu}
\address[D.~Smith]{University of Minnesota Twin Cities,
Department of Mathematics,
Minneapolis, MN 55455}

\email[D.~Smith]{\textcolor{blue}{\href{mailto:smi01055@umn.edu}{smi01055@umn.edu}}}

\begin{document}

\title{The Sandpile Group of a  Cone Over a Bi-Coconut Tree}

\begin{abstract}
 The sandpile group of a connected graph is a finite abelian group whose cardinality is the number of spanning trees in the graph.  We compute the spanning tree number and sandpile group structure for the cone over a bi-coconut tree, generalizing work of Reiner and Smith on the cone over a coconut tree.  We also answer one of their questions,
 by exhibiting a family of trees whose sandpile groups are all cyclic but their number of leaves grows without bound.
\end{abstract}

\maketitle

\section{Introduction and Preliminaries}

 In this paper,
we study two important and related invariants of  cones over bi-coconut trees: the spanning tree number and the sandpile group.
 We first explain some motivation and background and then present our
main results.

In \cite{Reiner2024SandpileGF}, the authors asked:
\begin{quote}
How does the ``shape" of the graph $G$ relate to the structure of its
sandpile group $K(G)$, including
the cardinality $|K(G)|=\tau(G)$ which is the number of spanning trees, and the minimal number of generators $\mu(G)$ for $K(G)$? 
\end{quote}
To explore the answer to this  question they examined the graphs which are cones over trees. In particular, they studied the cone over a coconut tree, a family which illustrates several interesting features,
and includes the extreme cases of cones over path graphs and star graphs. In this paper we further explore this motivating question.

The {\it cone over a graph}, denoted $\Cone(G)$, 
is obtained from a graph $G$ on vertex set $\{v_1,\ldots,v_n\}$ by adding an $(n+1)^{st}$ {\it cone vertex} $v_0$, with one edge $\{v_0,v_i\}$
for $i=1,2,\ldots,n.$ 
For $p,s$ positive integers, the {\it coconut tree} $\CT ps,$ is the graph  obtained from a
path on $p$ vertices by adding $s$  leaf vertices to  one endvertex of the path.  Here is $\CT 54$:
\vspace{.75cm}

\begin{center}
\input{drawing1}
\end{center}

\noindent
More generally, for $p,s_1,s_2$ positive integers, the {\it bi-coconut tree}, denoted $T(p,s_1,s_2)$, is obtained from a path on $p$ vertices by adding $s_1, s_2$ leaf vertices respectively to each endvertex of the path. Here is $T(5,3,4)$:
\vspace{.75cm}  

\begin{center}
\input{drawing2}
\end{center}

\noindent
Note that a coconut tree is a special case of a bi-coconut tree: $\CT ps =T(p-1,1,s)$.

\subsection{Sandpile groups and Laplacians}

Let  $G = (V, E)$ be a finite undirected graph on vertex set $V$ of cardinality $n$, and edge set $E$. The {\it sandpile group} $K(G)$ (defined below) is an interesting and subtle isomorphism invariant of $G$, that comes in the form of a finite abelian group whose cardinality is the number of spanning trees, $\tau(G).$

We consider graphs that are connected. Parallel edges are allowed, but no self-loops.  The {\it Laplacian matrix} of $G$ is defined to be the symmetric matrix $L_G := D - A$ where  $D$ is the diagonal matrix of vertex degrees in $G$, and $A$ denotes its $\{0,1\}$ adjacency matrix \cite[Chapter 2]{Klivans}. 
The $|V| \times |V|$  graph Laplacian,  whose rows and columns are both indexed by
the vertices $v$ in $V$, can be viewed as   a $\mathbb{Z}$-module map $L_G: \mathbb{Z}^V \rightarrow \mathbb{Z}^V$. One way to define the {\it sandpile group} $K(G)$ is as the torsion group of the integer cokernel of $L_G$.  Since $G$ is connected, this will imply $L(G)$ has
$1$-dimensional nullspace, spanned by the all ones vector $\onesvector_n$.  Hence
$L(G)$ has rank $n-1$, where $n$ is the number of vertices, so that 
the cokernel of $L_G$ is
\begin{equation*} \label{cokernel} 
\coker( L_G) = \mathbb{Z}^n / \im(L_G) \cong \mathbb{Z} \oplus K(G). 
\end{equation*}
Alternatively, one can consider a {\it reduced Laplacian} $\overline{L_G}$, obtained by deleting the $i^{th}$ row and $i^{th}$ column of the  Laplacian for any $i=1,2,\ldots,n$, 
and then one has
\begin{equation}
    \label{sandpile-group-presentation}
K(G):= \coker (\overline{L_G})= \mathbb{Z}^{n-1} / \im(\overline{L_G}).
\end{equation}

In practice, the sandpile group of $G$ can be computed  by calculating  the {\it Smith normal form} of the Laplacian $L_G$ \cite{LORENZINI20081271}: there exist $n \times n$ invertible integer matrices $P$ and $Q$ for which
$PL_GQ = \mathrm{diag}(\lambda_1...\lambda_n)$
where
$\mathrm{diag}(\lambda_1...\lambda_n)$ is a diagonal matrix with integer entries $\lambda_i$, and $\lambda_{i-1}$ divides $\lambda_{i}.$  It can be shown that  
$K(G)=\mathbb{Z}^n / \im(L_G) $ is isomorphic to  $\bigoplus_{i=1}^{n} \mathbb{Z} / \lambda_i\mathbb{Z}.$

Kirchhoff's Matrix Theorem  \cite[\S 5.6]{Stanley}, states that 
$$
|K(G)| =  \det (\overline{L_G}) = \tau(G),
$$ 
the number of {\it spanning trees} of $G.$ 
Thus one can think of the structure of the
group $K(G)$ as a refinement of the spanning
tree number $\tau(G)$. For additional background on sandpile groups, see the texts by Corry and Perkinson \cite[Chap. 6]{CorryPerkinson} and Klivans \cite[Chap. 4]{Klivans}.

\subsection{Main results}

\vskip.1in
For each integer $s_1 \geq 0$, define a sequence $\{b_n\}_{ n=-2,-1,0,1,2,3,\ldots}$
where $b_n:=b_n^{(s_1)}$ is defined by 
this Fibonacci-style recurrence \eqref{b_n}, whose initial conditions \eqref{b_-2},\eqref{b_-1} depend upon $s_1$:
\begin{align}
    b_n &= b_{n-1} + b_{n-2} \label{b_n}\\
b_{-2} &= 2^{s_1} \label{b_-2}\\  
b_{-1} &= 2^{s_1-1}(s_1+2).\label{b_-1}
\end{align}
In particular, using the Fibonacci number convention where $F_1=F_2=1$, one has

\begin{equation}
\label{b-F-relation}
b_n=\begin{cases}
F_{n+3}&\text{ when }s_1=0,\\
F_{n+5}&\text{ when }s_1=1.\\
\end{cases}
\end{equation}
For integers $p \geq 1$, and $s_1,s_2 \geq 0$, define a further quantity
\begin{equation}
\label{t-definition}
t(p,s_1,s_2):= 2^{s_2-1}(2b_{2p-3}^{(s_1)} +s_2 b_{2p-4}^{(s_1)}).
\end{equation}

\begin{theorem}
\label{spanning-tree-number-theorem}
For $p \geq 1$ and $s_1,s_2 \geq 1$, the cone over the bi-coconut tree $T(p,s_1,s_2)$ has
$$
\tau(\Cone(T(p,s_1,s_2)))=t(p,s_1,s_2).
$$
\end{theorem}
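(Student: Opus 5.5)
The plan is to compute $\tau$ via Kirchhoff's Matrix Theorem, choosing to delete the row and column of the cone vertex $v_0$. The reduced Laplacian is then $\overline{L} = I + L_T$, where $T = T(p,s_1,s_2)$, so $\tau(\Cone(T)) = \det(I+L_T)$.

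\emph{Step 1: read off the diagonal.} The diagonal entry of $I+L_T$ at a vertex is $\deg_T(v)+1$:
\begin{itemize}
\item each leaf has entry $2$;
\item each interior path vertex has entry $3$;
\item when $p\ge 2$, the path endpoint carrying $s_i$ leaves has entry $s_i+2$.
\end{itemize}
All off-diagonal nonzero entries are $-1$.

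\emph{Step 2: eliminate the leaves.} Each leaf row has a single off-diagonal entry $-1$, in the column of its endpoint. Taking a Schur complement on a leaf multiplies the determinant by $2$ and lowers that endpoint's diagonal entry by $\tfrac12$. After eliminating all leaves we get
\[
\det(I+L_T) = 2^{s_1+s_2}\det M,
\]
where $M$ is the $p\times p$ tridiagonal matrix with off-diagonals $-1$, interior diagonal $3$, and corner entries $\frac{s_1+4}{2}$ and $\frac{s_2+4}{2}$.

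\emph{Case $p=1$.} The single path vertex has diagonal $s_1+s_2+1$, which becomes $\frac{s_1+s_2+2}{2}$ after elimination. So the determinant is
\[
2^{s_1+s_2-1}(s_1+s_2+2),
\]
which equals $2^{s_2-1}(2b_{-1}+s_2b_{-2})$ by \eqref{b_-2} and \eqref{b_-1}. This is the claimed value $t(1,s_1,s_2)$.

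\emph{Step 3: leading minors of $M$ (for $p\ge2$).} Let $D_k$ be the $k$-th leading principal minor of $M$, starting from the $s_1$ end, with $D_0=1$. I claim
\[
c_k := 2^{s_1}D_k = b_{2k-2} \qquad \text{for } 0\le k\le p-1 .
\]
\begin{itemize}
\item Base cases: $c_0 = 2^{s_1} = b_{-2}$, and $c_1 = 2^{s_1-1}(s_1+4) = b_{-2}+b_{-1} = b_0$.
\item Induction: the interior diagonal is $3$, so $D_k = 3D_{k-1}-D_{k-2}$. Iterating \eqref{b_n} gives $b_n = 3b_{n-2}-b_{n-4}$, so the even-indexed subsequence satisfies the same recurrence.
\end{itemize}

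\emph{Step 4: expand along the last row.} Expanding $\det M$ along the last row, whose diagonal entry is $\frac{s_2+4}{2}$, gives
\[
2^{s_1+s_2}\det M = 2^{s_2}\Bigl(\tfrac{s_2+4}{2}\,b_{2p-4} - b_{2p-6}\Bigr) = 2^{s_2-1}\bigl(s_2b_{2p-4} + 4b_{2p-4} - 2b_{2p-6}\bigr).
\]
It remains to check that $2b_{2p-4} - b_{2p-6} = b_{2p-3}$. Using \eqref{b_n} twice,
\[
2b_{2p-4}-b_{2p-6} = b_{2p-4}+b_{2p-5} = b_{2p-3}.
\]
Substituting yields $2^{s_2-1}(2b_{2p-3}+s_2b_{2p-4}) = t(p,s_1,s_2)$, as claimed. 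For $p=2$ the same formula applies, with $b_{2p-6}=b_{-2}=c_0$.

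The main obstacle is bookkeeping rather than any single hard step. The endpoint diagonal entries must be handled consistently, since they differ between the case $p=1$ and the case $p\ge2$. The other point needing care is matching the index shift $c_k=b_{2k-2}$ against the odd-index term $b_{2p-3}$ in \eqref{t-definition}.
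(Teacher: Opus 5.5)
Your argument is correct, and it takes a genuinely different route from the paper. The paper never evaluates a determinant for this theorem. Instead it inducts on $s_1$: it applies the leaf deletion--contraction identity of Lemma~\ref{old-theorem-used-as-lemma}, then deletes and contracts one copy of the doubled edge $\{v_0,\pi_1\}$, obtaining $\tau(\Cone(T(p,s_1,s_2)))=2\tau(\Cone(T(p,s_1-1,s_2)))+2^{s_1-1}\tau(\Coneplus{\pi_1}(\CT{p-1}{s_2}))$. It imports the coconut formulas of Theorem~\ref{old-theorem_in_RS}, both for the base case $s_1=1$ and for the second term. It then needs the auxiliary identities $b^{(s_1)}_{2p-4}=2b^{(s_1-1)}_{2p-4}+2^{s_1-1}F_{2p-2}$ and $b^{(s_1)}_{2p-3}=2b^{(s_1-1)}_{2p-3}+2^{s_1-1}(F_{2p}-F_{2p-2})$, proved by a separate induction on $p$. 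You instead apply Kirchhoff's theorem to $I+L_T$, strip off the leaves by Schur complements, and identify $2^{s_1}$ times the leading principal minors of the resulting tridiagonal matrix with $b_{2k-2}$ via $b_n=3b_{n-2}-b_{n-4}$.

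Your route is self-contained: nothing from \cite{Reiner2024SandpileGF} is used. It handles $p=1$ explicitly, whereas the paper's inductive step at $p=1$ passes through the degenerate coconut tree $\CT{0}{s_2}$. It also explains where the sequence $b^{(s_1)}_n$ comes from. The initial values $b_{-2},b_{-1}$ encode the left star after leaf elimination, the even-indexed terms are the path continuants, and the final expansion at the $s_2$ end produces exactly $2b_{2p-3}+s_2b_{2p-4}$. In this it matches the spirit of the determinant computation for $M'$ in the proof of Theorem~\ref{sandpile-group-theorem}.

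The paper's route, by contrast, stays within the deletion--contraction framework of \cite{Reiner2024SandpileGF}. It presents the bi-coconut count as a direct extension of the coconut one, with the powers of $2$ arising combinatorially from the doubled edges to the remaining $s_1-1$ leaves.
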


The asymmetry of $s_1, s_2$ in the formula for $t(p,s_1,s_2)$
in Theorem~\ref{spanning-tree-number-theorem} is
resolved by the next result.

\begin{theorem}
\label{generating-function-theorem}
For $s_1,s_2 \geq 1$,  $t(p,s_1,s_2)$ has
ordinary generating function in $p$ given by
$$
\displaystyle\sum_{p=1}^\infty t(p, s_1, s_2)x^{p-1}
=2^{s_1+s_2-1} \cdot \displaystyle\frac{4 + 2(s_1+s_2)(1-x) +s_1s_2x}{1-3x+x^2}.
$$ 
In particular, this function is symmetric in $s_1,s_2$, so the same holds for $t(p,s_1,s_2)$.
\end{theorem}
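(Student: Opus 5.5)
The plan is to reduce everything to the bisected sequence $b_{2p-3}, b_{2p-4}$ and use the standard fact that every other term of a Fibonacci-type sequence satisfies a three-term recurrence with characteristic polynomial $1-3x+x^2$. Concretely, I would set $c_p := b^{(s_1)}_{2p-3}$ and $d_p := b^{(s_1)}_{2p-4}$ for $p\ge 1$. Iterating \eqref{b_n} twice gives $b_n = 3b_{n-2} - b_{n-4}$. Hence both $c_p$ and $d_p$ satisfy $u_p = 3u_{p-1} - u_{p-2}$ for $p\ge 3$. By \eqref{t-definition}, $t(p,s_1,s_2) = 2^{s_2-1}(2c_p + s_2 d_p)$ is a linear combination of them, so it satisfies the same recurrence. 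Therefore $(1-3x+x^2)\sum_{p\ge1} t(p,s_1,s_2)x^{p-1}$ is a polynomial of degree at most $1$, equal to $t(1,s_1,s_2) + \bigl(t(2,s_1,s_2) - 3t(1,s_1,s_2)\bigr)x$.

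The key step is then to compute the two initial values from \eqref{b_-2}, \eqref{b_-1} and \eqref{b_n}:
\begin{align*}
b_{-2} &= 2^{s_1}, & b_{-1} &= 2^{s_1-1}(s_1+2),\\
b_0 &= 2^{s_1-1}(s_1+4), & b_1 &= 2^{s_1}(s_1+3).
\end{align*}
Substituting these gives
$$
t(1,s_1,s_2) = 2^{s_2-1}\bigl(2b_{-1}+s_2b_{-2}\bigr) = 2^{s_1+s_2-1}(s_1+s_2+2)
$$
and
$$
t(2,s_1,s_2) = 2^{s_2-1}\bigl(2b_1+s_2b_0\bigr) = 2^{s_1+s_2-2}\bigl(4s_1+12+s_1s_2+4s_2\bigr).
$$
From these one assembles the numerator and factors out the power of $2$. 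The linear coefficient comes out proportional to $-2(s_1+s_2) + s_1s_2$, which matches the shape $2(s_1+s_2)(1-x) + s_1s_2x$ in the statement. Symmetry in $s_1,s_2$ is then immediate, because the numerator depends only on $s_1+s_2$ and $s_1s_2$.

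The main obstacle is the bookkeeping of the overall power of $2$, and I would audit it first. With the sequence normalized exactly as in \eqref{b_-2}, \eqref{b_-1}, the constant term computed above is $2^{s_1+s_2-1}(s_1+s_2+2)$. The right-hand side of the statement instead has constant term $2^{s_1+s_2-1}(4+2(s_1+s_2)) = 2^{s_1+s_2}(s_1+s_2+2)$. I would cross-check this against a small graph, using Theorem~\ref{spanning-tree-number-theorem}. For $p=s_1=s_2=1$ the cone is the fan over $P_3$, which has $8$ spanning trees, and indeed $t(1,1,1) = 2\cdot 4 = 8$. So the definitions in \eqref{t-definition} give the prefactor $2^{s_1+s_2-2}$, while the stated prefactor is $2^{s_1+s_2-1}$.

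The plan therefore has two parts. First, derive the generating function via the recurrence as above. Second, reconcile the stated normalization, presumably by identifying the intended indexing or normalization of $t$ or $b_n$ under which the displayed formula holds. I expect this factor-of-$2$ bookkeeping to be the only delicate point. It does not affect the symmetry conclusion, since rescaling by a power of $2$ preserves symmetry in $s_1,s_2$.
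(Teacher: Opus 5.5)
Your argument is correct. Your audit of the power of $2$ is also right: the paper's own computation ends with
\[
\sum_{p\ge 1} t(p,s_1,s_2)\,x^{p-1} \;=\; 2^{s_1+s_2-2}\cdot\frac{4+2(s_1+s_2)(1-x)+s_1s_2x}{1-3x+x^2},
\]
so the prefactor $2^{s_1+s_2-1}$ in the displayed statement (both in the introduction and where it is restated) is a misprint. Your value $t(1,s_1,s_2)=2^{s_1+s_2-1}(s_1+s_2+2)$ settles this. It is exactly the spanning tree count of the cone over the star $K_{1,s_1+s_2}$, and it gives $8=\tau(\Cone(P_3))$ at $s_1=s_2=1$. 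Since $t$ is tied to $\tau$ by Theorem~\ref{spanning-tree-number-theorem}, no other normalization of $t$ or $b_n$ could make the stated prefactor hold. You can drop the ``reconciliation'' half of your plan and simply state the corrected constant. Symmetry is unaffected, as you note.

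Your route differs from the paper's. The paper first solves the Fibonacci recurrence for the whole series,
\[
B(x)=\sum_{n\ge -2} b_n x^{n+2}=\frac{2^{s_1-1}(2+s_1x)}{1-x-x^2}.
\]
It then extracts the two subsequences by bisection, taking $\tfrac12\bigl(B(x)\mp B(-x)\bigr)$ (divided by $x$ in the odd case) and substituting $x\mapsto x^{1/2}$. The denominator $1-3x+x^2$ arises there from $(1-x-x^2)(1+x-x^2)=1-3x^2+x^4$. You instead use $b_n=3b_{n-2}-b_{n-4}$ to know the denominator in advance, so you only need $t(1,s_1,s_2)$ and $t(2,s_1,s_2)$, i.e.\ $b_{-2},b_{-1},b_0,b_1$.

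Your version is shorter and leaves less room for algebra slips. The paper's displayed bisection formula for the even part carries a stray division by $x$, and its final simplification is left to the reader. The paper's version does give slightly more information: separate closed forms
\[
\sum_{p\ge1} b_{2p-3}x^{p-1}=2^{s_1-1}\,\frac{(s_1+2)-s_1x}{1-3x+x^2},
\qquad
\sum_{p\ge1} b_{2p-4}x^{p-1}=2^{s_1-1}\,\frac{2+(s_1-2)x}{1-3x+x^2}.
\]
It also makes visible why the bisected Fibonacci-type sequences produce the denominator $1-3x+x^2$.
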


The next result gives the sandpile group structure
for $\Cone(T(p,s_1,s_2)$.

\begin{theorem}
\label{sandpile-group-theorem}
For $p \geq 1$ and $s_1,s_2 \geq 1$, defining
\begin{equation}
\label{formula-for-a}
    a:=2^{1-s_1}(2\cdot b_{2p-3}^{(s_1)} +s_2b_{2p-4}^{(s_1)})
    =\frac{t(p,s_1,s_2)}{2^{s_1+s_2-2}},
\end{equation}
then the sandpile group for the cone over the
bi-coconut tree $T(p,s_1,s_2)$ has this description:
$$
K(\Cone(T(p,s_1,s_2)) \cong
\begin{cases}
    \Z_2^{s_1+s_2-2} \oplus \Z_a 
    & \text{ if } p \equiv 1 \bmod{3},\\
    \Z_2^{s_1+s_2-3} \oplus \Z_{2a}   
    & \text{ if }s_1   \text{ or } s_2 \text{ odd } \text{ and }p \not\equiv 1 \bmod{3},\\
    \Z_2^{s_1+s_2-4} \oplus \Z_4\oplus\Z_{a}  
    & \text{ if }s_1 \text{ and } s_2  \text{ even  and }p \not\equiv 1 \bmod{3}.
\end{cases}
$$
\end{theorem}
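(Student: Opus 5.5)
We will compute $K(\Cone(T(p,s_1,s_2)))$ as the cokernel of the reduced Laplacian $\overline{L}$ obtained by deleting the row and column of the cone vertex $v_0$. Every tree vertex then sits on the diagonal with its tree degree plus one. So each leaf gives a diagonal entry $2$ and a $-1$ linking it to its attached endvertex. The path vertices $u_1,\dots,u_p$ give a tridiagonal block with diagonal entries $3$ in the interior, and $s_1+2$, $s_2+2$ at the two ends (for $p\ge 2$). We will work directly with the presentation \eqref{sandpile-group-presentation}: generators $x_\ell$ for the leaves and $y_1,\dots,y_p$ for the path, with one relation per vertex.

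\textbf{Step 1: eliminating the path.} The relation at $u_i$ for $1<i<p$ reads $3y_i=y_{i-1}+y_{i+1}$. Solve it for $y_{i+1}$ in terms of $y_{i-1},y_i$; these are unimodular row and column operations. The relation at $u_1$ similarly expresses $y_2$ as $(s_1+2)y_1-\sum x_\ell$ over the leaves at $u_1$. Iterating, every $y_i$ becomes an integer combination
\[
y_i = \alpha_i\, y_1 - \beta_i\, \sigma_1,
\]
where $\sigma_1$ is the sum of the $s_1$ leaf generators at $u_1$. The coefficients satisfy the recurrence $c_{i+1}=3c_i-c_{i-1}$, which is the even-index Fibonacci recurrence. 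This is why $b_{2p-3},b_{2p-4}$ appear, and we should be able to identify $\alpha_i,\beta_i$ with the $b_n$ suitably scaled.

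The remaining generators are then $y_1$, the $s_1$ leaves at $u_1$, and the $s_2$ leaves at $u_p$. The remaining relations are:
\begin{itemize}[label={}]
\item $2x_\ell=y_1$ for each leaf at $u_1$;
\item $2x'_m=y_p$ for each leaf at $u_p$;
\item the relation at $u_p$, namely $(s_2+2)y_p-y_{p-1}-\sigma_2=0$, which after substitution becomes a single linear relation in $y_1,\sigma_1,\sigma_2$.
\end{itemize}

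\textbf{Step 2: the leaves.} The relations $2x_\ell=y_1$ show that the differences $x_\ell-x_1$ are $2$-torsion. Changing basis to $x_1$ and $d_\ell=x_\ell-x_1$ (and similarly at the other end) splits off $\Z_2^{s_1-1}\oplus\Z_2^{s_2-1}$. The exception is the contribution of the $d$'s to $\sigma_1=s_1x_1+\sum d_\ell$ and to $\sigma_2$, which couple the $d$'s back into the last relation. We are left with a small presentation: the generators $x_1,x'_1,\sum d_\ell,\sum d'_m$, and the relations $y_1=2x_1$, $y_p=2x'_1$ together with the final relation. The determinant of this small matrix must equal $t/2^{s_1+s_2-2}=a$ up to the split-off factors, by Theorem~\ref{spanning-tree-number-theorem}. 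Hence the group is $\Z_2^{s_1+s_2-2}\oplus C$, modified by how the $2$-torsion classes $\sum d_\ell$ (present only when $s_1\ge2$) interact with the final relation. This interaction is where the parities of $s_1$ and $s_2$ enter.

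\textbf{Step 3: Smith normal form of the small block.} We will compute the gcd of the entries and of the $2\times2$ minors of the residual matrix, which is of size about $3$ or $4$. The first determinantal divisor should be $1$. Whether the second is $1$ or $2$ determines the split between $\Z_a$ and $\Z_2\oplus$ (part of $\Z_{2a}$ or $\Z_4$). The key parity input is $b_n \bmod 2$. Since $b_{-2}=2^{s_1}$ and $b_{-1}=2^{s_1-1}(s_1+2)$, after normalizing by $2^{s_1-1}$ the sequence mod $2$ has period $3$. The behavior of $b_{2p-3},b_{2p-4}$ mod $2$ therefore depends on $p \bmod 3$, which explains the condition $p\equiv1\pmod 3$.

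For $p\not\equiv 1$ the minors share a factor $2$. Whether the resulting $2$-part is $\Z_2\oplus\Z_{2a}$-type (cyclic $\Z_{2a}$ together with one fewer $\Z_2$) or $\Z_4\oplus\Z_a$ is decided by whether $a$ is even, together with the parities of $s_1,s_2$, through the term $s_2b_{2p-4}$ and its symmetric counterpart. Symmetry from Theorem~\ref{generating-function-theorem} lets us treat $s_1$ odd and $s_2$ odd alike.

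\textbf{Main obstacle.} The main obstacle is the case bookkeeping in Step 3: tracking $2$-adic valuations of $a$ and of the minors exactly, to distinguish $\Z_{2a}\oplus\Z_2^{s_1+s_2-3}$ from $\Z_4\oplus\Z_a\oplus\Z_2^{s_1+s_2-4}$. We would first tabulate $b_n/2^{s_1-1}$ mod $4$ over a period in $n$ for each parity of $s_1$, and then read off the minors. The cases $p=1,2$ (where the path block degenerates) will be checked separately.
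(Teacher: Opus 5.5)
Your Steps 1--2 are a legitimate route, and they differ from the paper's. The paper invokes Theorem~\ref{generators} to know the leaves generate $K$, builds the relation matrix $M'$, and then needs Step 3 (Lemma~\ref{a-is-2x}) to certify, via $|\det M'|=t$, that no relations were lost. Your unimodular elimination of $y_2,\dots,y_p$ and $y_1=2x_1$ on the full reduced Laplacian gives an exact square presentation directly. However, the proposal stops where the theorem's content begins: Step 3 is only a plan, and its guidance points at the wrong invariants.

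(i) The $p \bmod 3$ dichotomy does not come from $b_n \bmod 2$. With $c_n=b_n/2^{s_1-1}$ one has $c_{-2}=2$ and $c_{-1}=s_1+2$, so for even $s_1$ every $c_n$ is even regardless of $p$. Yet the answer still changes with $p \bmod 3$. The real source is already in your Step 1: $y_p=\alpha_p y_1-\beta_p\sigma_1$ with $\beta_p=F_{2p-2}$ (the paper's $g$). This is even exactly when $p\equiv 1\pmod 3$, and its parity decides whether $D=\sum_\ell(x_\ell-x_1)$ is glued to $x'_1$ through the relation $2x'_1=y_p$.

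(ii) Your Step 2 count is wrong. Only $\Z_2^{s_1-2}\oplus\Z_2^{s_2-2}$ splits off; $D$ and $D'$ remain in a $4\times4$ block of determinant $\pm 4a$. The claim $K\cong\Z_2^{s_1+s_2-2}\oplus C$ fails in the second case whenever $a$ is even, since $\Z_2^{s_1+s_2-3}\oplus\Z_{2a}$ then has $2$-rank only $s_1+s_2-2$.

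(iii) The second versus third case is decided by the parities of $s_1,s_2$ alone, not by the parity or $2$-adic valuation of $a$. For $p=2$ with $s_1$ odd and $s_2$ even, $a=s_1s_2+4s_1+4s_2+12$ is even, yet the group is $\Z_2^{s_1+s_2-3}\oplus\Z_{2a}$. Separating these cases also needs the third determinantal divisor of your $4\times4$ block, not just the first two.

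To finish your route, use the coefficient $-1$ of $D'$ in the $u_p$-relation to eliminate $D'$. With rows indexed by $x_1,x'_1,D$ you are left with
\[
\begin{pmatrix} 0 & -P & 2Q\\ 0 & 2 & -2s_2\\ 2 & \beta & 2R\end{pmatrix},\qquad \beta=F_{2p-2},\quad P=(s_1+4)F_{2p-2}-2F_{2p-4},
\]
with $Q,R$ integers and determinant $\pm 4a$.

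If $\beta$ is even, then $P$ is even. The matrix is twice an integer matrix having a $2\times2$ minor equal to $-1$, so its Smith form is $\diag(2,2,a)$.

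If $\beta$ is odd, then $\Delta_1=1$ and every $2\times2$ minor is even. The minors $2P$ and $4R+2s_2\beta$ are congruent to $2s_1$ and $2s_2$ modulo $4$. Hence $\Delta_2=2$ unless $s_1,s_2$ are both even. In that case $Q$ is also even and $\Delta_2=4$. This gives $\diag(1,2,2a)$ or $\diag(1,4,a)$, and no mod-$4$ tabulation of $b_n$ is needed.

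Two smaller points. First, the symmetry you need is the isomorphism $T(p,s_1,s_2)\cong T(p,s_2,s_1)$; Theorem~\ref{generating-function-theorem} only concerns $|K|$. Second, the cases $s_i=1$, where there is no $D$ or $D'$, must be handled separately. For $s_1=s_2=1$ and $p\not\equiv 1\pmod 3$ the stated formula is meaningless ($\Z_2^{-1}$). Your method gives $K\cong\Z_a$ there, as it must, since the cone over a path is a fan.
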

\noindent
Theorems~\ref{spanning-tree-number-theorem}
and \ref{sandpile-group-theorem} specialize
to the analogous results from \cite{Reiner2024SandpileGF},
using \eqref{b-F-relation}. In particular, we will use
a result from \cite{Reiner2024SandpileGF}, namely
Theorem~\ref{old-theorem_in_RS} below as a lemma in our proof of
Theorem~\ref{spanning-tree-number-theorem}.
Given a vertex $v$ in a tree $T$, define the graph $\Coneplus{v}(T)$ to be the graph obtained from $\Cone(T)$ with cone vertex $v_0$ by creating a second parallel copy of the edge $\{v_0,v\}$.

\begin{theorem}  
\label{old-theorem_in_RS}
\cite{Reiner2024SandpileGF}
For $p,s \geq 1$, the cone over the coconut tree $\CT ps$
has 

\begin{align}
\tau (\Cone( \CT ps)) &= 2^{s-1} \cdot (2{F}_{2p+1} + (s-2){F}_{2p-1}),\label{eq:1}\\
\tau (\Coneplus{\pi_1}(\CT ps)) &= 2^{s-1} \cdot (2{F}_{2p+2} + (s-2){F}_{2p})\label{eq:2} 
\end{align}
where $\pi_1$ is the endpoint vertex of the path
inside $\CT ps$
which is {\it not} attached to the $s$ leaves, and $F_n$ is the $n$th Fibonacci number.
\end{theorem}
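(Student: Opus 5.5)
The plan is to compute both spanning tree numbers as determinants of reduced Laplacians via Kirchhoff's Matrix Theorem, deleting the row and column of the cone vertex $v_0$, and then to eliminate the $s$ leaves by Schur complements. Label the path $\pi_1,\dots,\pi_p$, with the $s$ leaves attached to $\pi_p$. In $\Cone(\CT ps)$:
\begin{itemize}
\item each leaf has degree $2$ (neighbours $\pi_p$ and $v_0$);
\item interior path vertices have degree $3$;
\item $\pi_1$ has degree $2$;
\item $\pi_p$ has degree $s+2$ (for $p\ge 2$).
\end{itemize}
So $\overline{L}$ has the block form with a $2I_s$ block for the leaves, coupled only to $\pi_p$ by $-1$ entries, and a tridiagonal path block.

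\textbf{Eliminating the leaves.} Taking the Schur complement of the $2I_s$ block multiplies the determinant by $2^s$. It leaves the path block unchanged except that the $(\pi_p,\pi_p)$ entry drops from $s+2$ to $s+2-s/2=2+s/2$. Thus $\tau = 2^s \det M$, where $M$ is the $p\times p$ tridiagonal matrix with off-diagonal entries $-1$ and diagonal $(2,3,3,\dots,3,2+s/2)$.

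\textbf{Evaluating $\det M$.} Let $D_k$ denote the determinant of the $k\times k$ tridiagonal matrix with diagonal $(2,3,\dots,3)$. Then $D_k=3D_{k-1}-D_{k-2}$ with $D_0=1$ and $D_1=2$, so $D_k=F_{2k+1}$; this follows from the identity $F_{m+2}=3F_m-F_{m-2}$. Expanding $\det M$ along its last row gives
$$\det M=(2+s/2)F_{2p-1}-F_{2p-3},$$
hence
$$\tau=2^{s-1}\bigl((4+s)F_{2p-1}-2F_{2p-3}\bigr).$$
Since $2F_{2p+1}=6F_{2p-1}-2F_{2p-3}$, this equals $2^{s-1}(2F_{2p+1}+(s-2)F_{2p-1})$, which is \eqref{eq:1}.

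\textbf{The graph $\Coneplus{\pi_1}$.} Here the only change is that the $(\pi_1,\pi_1)$ entry becomes $3$. The analogous determinants $D'_k$ (diagonal $(3,\dots,3)$) satisfy the same recurrence with $D'_0=1$ and $D'_1=3$, so $D'_k=F_{2k+2}$. The same expansion gives
$$\tau=2^{s-1}\bigl((4+s)F_{2p}-2F_{2p-2}\bigr)=2^{s-1}\bigl(2F_{2p+2}+(s-2)F_{2p}\bigr),$$
which is \eqref{eq:2}.

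\textbf{The case $p=1$.} This is the main point of care, because $\pi_1=\pi_p$ carries the leaves and the above degrees change. For \eqref{eq:1}, $\pi_1$ has degree $s+1$, so its reduced diagonal entry is $1+s/2$ and $\tau=2^{s-1}(s+2)$. This matches the formula using the conventions $F_1=F_3/2\cdot 1$, i.e.\ $F_{-1}=1$. For \eqref{eq:2}, the meaning of ``the endpoint not attached to the leaves'' must be fixed when $p=1$. I would verify directly which convention reproduces $2^{s-1}(s+4)$, or else restrict \eqref{eq:2} to $p\ge 2$, which is what the application to bi-coconut trees requires.

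Apart from this, the only obstacle is bookkeeping of Fibonacci index shifts.
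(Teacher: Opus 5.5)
The paper gives no proof of this statement; it is imported from \cite{Reiner2024SandpileGF}. Judged on its own, your argument for $p\ge 2$ is correct. The degrees, the Schur-complement entry $2+\tfrac{s}{2}$ at $\pi_p$, the identities $D_k=F_{2k+1}$ and $D'_k=F_{2k+2}$, and the final rewriting via $F_{m+2}=3F_m-F_{m-2}$ all check out. Your direct value $2^{s-1}(s+2)$ for \eqref{eq:1} at $p=1$ is also right. (The remark about $F_{-1}$ there is garbled and unneeded: at $p=1$, \eqref{eq:1} reads $2^{s-1}(2F_3+(s-2)F_1)$, with no negative indices.)

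The one piece left undone is \eqref{eq:2} at $p=1$, and your fallback of restricting to $p\ge 2$ is not available. First, the statement includes $p=1$. Second, the paper's proof of Theorem~\ref{spanning-tree-number-theorem} at $p=2$ actually invokes \eqref{eq:2} for $\CT{1}{s_2}$. Contracting one copy of the doubled edge at $\pi_1$ in $\Coneplus{\pi_1}(T(2,s_1-1,s_2))$ leaves, up to the factor $2^{s_1-1}$, the cone over the star formed by $\pi_2$ and its $s_2$ leaves, with the cone edge at $\pi_2$ doubled. So $\pi_1$ must be read as the unique path vertex, the centre of the star. Then the check is one line: its degree is $s+2$, eliminating the leaves leaves the entry $s+2-\tfrac{s}{2}$, and $\tau=2^{s}(2+\tfrac{s}{2})=2^{s-1}(s+4)=2^{s-1}(2F_4+(s-2)F_2)$, as required.

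For comparison, the tool the paper does use, Lemma~\ref{old-theorem-used-as-lemma}, gives a shorter route. Set $a_p:=\tau(\Cone(\CT{p}{s}))$ and $c_p:=\tau(\Coneplus{\pi_1}(\CT{p}{s}))$. Applying the lemma to $T=\CT{p}{s}$ with $\ell=\pi_1$ and $v=\pi_2$ yields $a_p=a_{p-1}+c_{p-1}$ and $c_p=a_p+c_{p-1}$. Hence the interleaved sequence $a_1,c_1,a_2,c_2,\ldots$ obeys the Fibonacci recurrence. It therefore equals $2^{s-1}(2F_{n+2}+(s-2)F_n)$, being determined by its first two terms $2^{s-1}(s+2)$ and $2^{s-1}(s+4)$.

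That argument explains at a glance why odd-indexed Fibonacci numbers appear in \eqref{eq:1} and even-indexed ones in \eqref{eq:2}. However, it rests entirely on the $p=1$ values, including exactly the one you left open. Your determinant computation is self-contained from Kirchhoff's theorem and treats all $p\ge 2$ uniformly. It is also in the spirit of the leaf-eliminating matrix reductions used for Theorem~\ref{sandpile-group-theorem}.
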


\noindent
We will also need a fact used in \cite{Reiner2024SandpileGF}
about spanning tree numbers $\tau(G)$ for cone graphs $G$, a special case of the {\it deletion-contraction recurrence} $\tau(G)=\tau(G/e)+\tau(G\setminus e)$.

\begin{lemma}\cite[Prop.~3.5 at $y=1$]{Reiner2024SandpileGF}\label{old-theorem-used-as-lemma} For $T$ a tree, $\ell$ a leaf vertex of $T$, and $v$ the unique neighbor vertex of $\ell$ in $T$, one has
\begin{align} 
%\label{specialized-leaf-del-con}
 \tau(\Cone(T))
&=\tau(\Cone(T -\ell))
 + \tau({\Coneplus{v}(T-\ell)}),\label{thm 1.5-1}\\
\tau(\Coneplus{\ell}(T)))
&=\tau(\Cone(T))
 + \tau(\Coneplus{v}(T-\ell)).\label{thm 1.5-2}
\end{align}
\end{lemma}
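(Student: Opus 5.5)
Both identities will come from the deletion-contraction recurrence $\tau(G)=\tau(G\setminus e)+\tau(G/e)$, valid for multigraphs and any non-loop edge $e$. The only work is choosing the edge $e$ well and identifying the two resulting graphs. I will use two standard facts throughout. First, self-loops produced by a contraction can be discarded, since no spanning tree uses a loop. Second, if a vertex has degree one, every spanning tree must use its unique edge, so $\tau$ is unchanged when that vertex and its edge are removed.

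For \eqref{thm 1.5-1}, in $\Cone(T)$ the leaf $\ell$ has exactly two incident edges, $\{\ell,v\}$ and $\{\ell,v_0\}$. I take $e=\{\ell,v\}$.
\begin{itemize}
\item \emph{Deletion.} In $\Cone(T)\setminus e$ the vertex $\ell$ has degree one, joined only to $v_0$. Removing it leaves exactly $\Cone(T-\ell)$, so $\tau(\Cone(T)\setminus e)=\tau(\Cone(T-\ell))$.
\item \emph{Contraction.} Contracting $e$ merges $\ell$ into $v$, and the edge $\{\ell,v_0\}$ becomes a second copy of $\{v,v_0\}$. The remaining vertices and edges are those of $\Cone(T-\ell)$, so $\Cone(T)/e=\Coneplus{v}(T-\ell)$.
\end{itemize}
Summing the two terms gives \eqref{thm 1.5-1}.

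For \eqref{thm 1.5-2}, in $\Coneplus{\ell}(T)$ the leaf $\ell$ has two parallel edges to $v_0$ and one edge to $v$. I take $e$ to be one of the two parallel copies of $\{\ell,v_0\}$.
\begin{itemize}
\item \emph{Deletion.} Deleting $e$ returns exactly $\Cone(T)$.
\item \emph{Contraction.} Contracting $e$ identifies $\ell$ with $v_0$. The other parallel copy becomes a loop, which I discard. The edge $\{\ell,v\}$ becomes an additional edge $\{v_0,v\}$, alongside the cone edge $\{v_0,v\}$ already present. All other edges are those of $\Cone(T-\ell)$, so the result is $\Coneplus{v}(T-\ell)$.
\end{itemize}
Summing the two terms gives \eqref{thm 1.5-2}.

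There is no serious obstacle here. The only point needing care is the bookkeeping in the two contractions: checking that exactly one extra parallel edge $\{v,v_0\}$ appears, and that the loop in the second case is harmless. Both facts hold for deletion-contraction in multigraphs.
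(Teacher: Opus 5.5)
Your proof is correct and is essentially the argument the paper relies on: the paper gives no separate proof, citing the lemma from Reiner--Smith as a special case of $\tau(G)=\tau(G/e)+\tau(G\setminus e)$, and your edge choices and identifications of the deleted and contracted graphs fill in that argument correctly. One cosmetic difference is that, for the first identity, the paper's illustration in Section~2 deletes and contracts the cone edge $\{v_0,\ell\}$ rather than $\{\ell,v\}$; this produces the same two graphs, $\Cone(T-\ell)$ and $\Coneplus{v}(T-\ell)$.
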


The proof of Theorem~\ref{sandpile-group-theorem} also uses as a crucial tool the following result from \cite{Reiner2024SandpileGF} on generators for
$K(G)$ when $G=\Cone(T)$ for any tree $T$. Let $\{e_v\}$ be the standard basis vectors
for $\Z^{n-1}$, and denote by
$\{ \overline{e}_v \}$ their images within
the quotient presentation
of the sandpile group 
$K(G)=\Z^{n-1}/\im \overline{L}_G$
from \eqref{sandpile-group-presentation}.

\begin{theorem}
\label{generators} \cite{Reiner2024SandpileGF} 
Choosing the cone vertex of $\Cone(T)$ as the root 
vertex $v_0$, the group $K(\Cone(T))$
is generated by 
$
\{\overline{e}_v: \text{ leaves }v \neq v_1\}
$
for any choice of a leaf vertex $v_1$ in $T$.
\end{theorem}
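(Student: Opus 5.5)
We would prove Theorem~\ref{generators} directly from the presentation \eqref{sandpile-group-presentation}. We take the reduced Laplacian obtained by deleting the row and column of the cone vertex $v_0$. Every vertex $v$ of $T$ is adjacent to $v_0$ in $\Cone(T)$, so this reduced Laplacian is $I+L_T$, where $L_T$ is the Laplacian of $T$. Its columns therefore give, for each vertex $v$ of $T$, the relation
\begin{equation*}
(\deg_T(v)+1)\,\overline{e}_v \;=\; \sum_{u \sim v} \overline{e}_u \qquad \text{in } K(\Cone(T)),
\end{equation*}
where the sum runs over the neighbors $u$ of $v$ in $T$. Let $H$ be the subgroup generated by $\{\overline{e}_v : v \text{ a leaf},\ v\neq v_1\}$. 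Since the $\overline{e}_v$ for all $v \in T$ generate $K(\Cone(T))$, it suffices to show that $\overline{e}_v\in H$ for every vertex $v$ of $T$.

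The key observation is that the relation at a vertex $c$ can be solved for any single neighbor $w$ of $c$:
\begin{equation*}
\overline{e}_w \;=\; (\deg_T(c)+1)\,\overline{e}_c-\sum_{u\sim c,\ u\neq w}\overline{e}_u .
\end{equation*}
We root $T$ at the leaf $v_1$, so that every vertex other than $v_1$ has a parent. For each vertex $v$, define its height $h(v)$ to be the maximum distance from $v$ down to a descendant leaf. We then show by induction on $h(w)$ that $\overline{e}_w\in H$ for every vertex $w\neq v_1$.

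\begin{itemize}
\item \textbf{Base case.} A non-root vertex of height $0$ is a leaf different from $v_1$, so $\overline{e}_w\in H$ by definition.
\item \textbf{Inductive step.} Let $w\neq v_1$ have height $h\ge 1$. Then $w$ has at least one child $c$. Apply the displayed identity at $c$, solving for its parent $w$. This expresses $\overline{e}_w$ in terms of $\overline{e}_c$ and the $\overline{e}_u$ for the children $u$ of $c$. All of these vertices have height $<h$ and none is the root, so by induction they lie in $H$. Hence $\overline{e}_w\in H$.
\end{itemize}

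Finally, we treat the root $v_1$ itself. Because $v_1$ is a leaf, it has a unique neighbor $c$, and $c$ is a non-root vertex. If $T$ has at least two vertices, apply the identity at $c$, solving for $v_1$. This writes $\overline{e}_{v_1}$ in terms of $\overline{e}_c$ and the $\overline{e}_u$ for the children of $c$, all of which are already known to lie in $H$. If instead $T$ is a single vertex, then $K(\Cone(T))$ is trivial, since $\Cone(T)$ is a single edge, and the statement holds with the empty generating set.

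There is no real obstacle here. The only point requiring care is choosing the induction order so that solving the relation at $c$ for the parent never uses the parent's own unknown value, or the value at $v_1$. Rooting at $v_1$ and inducting on height, rather than on depth, guarantees exactly this.
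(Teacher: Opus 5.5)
Your proof is correct and complete. The paper does not prove this statement itself: it imports it from \cite{Reiner2024SandpileGF} and uses it as a black box in {\sf Step 4}, so there is no in-paper argument to compare against line by line.

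Your mechanism is the one the paper uses in Lemma~\ref{CT-trunk-lemma} to push $\overline{e}_{\pi_1},\overline{e}_{\pi_2}$ along the path. You solve the column relation $(\deg_T(c)+1)\overline{e}_c=\sum_{u\sim c}\overline{e}_u$ for a single neighbor, which is legitimate over $\Z$ because every neighbor appears with coefficient exactly $-1$. What you add is organizing this on an arbitrary tree. Inducting on height rather than depth is what keeps both the parent and $v_1$ off the right-hand side. Your separate treatment of $v_1$ and of the one-vertex tree covers the remaining cases.

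If you want a matrix form closer to the determinantal-divisor reasoning in the proof of Theorem~\ref{comb-tree-theorem}, here it is. Assume $T$ has at least two vertices. For each $w$ in $S=\{v_1\}\cup\{\text{non-leaves of }T\}$, choose a child $c(w)$. Take the columns of $I+L_T$ indexed by the $c(w)$, restrict them to the rows in $S$, and order both by depth. The result is a triangular matrix with every diagonal entry equal to $-1$. Unimodularity of this block gives your generation statement immediately. It also exhibits a minor equal to $\pm1$ of size $|S|=|V(T)|-\ell(T)+1$, which recovers $\mu(\Cone(T))\le \ell(T)-1$ directly from the invariant factors.
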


In particular, the number of generators $\mu(\Cone(T))$ for the sandpile group $K(\Cone(T))$ is bounded above by $\ell(T)-1$, where $\ell(T)$ is the number of leaves of $T$.  For the coconut and
bi-coconut trees, this bound turns out to be nearly sharp, but the authors in \cite{Reiner2024SandpileGF} suspected that this
bound should be far from sharp in general for trees.  They asked the following question in
\cite[Question 2.2]{Reiner2024SandpileGF}:
\begin{quote}
Are there trees $\{T_p\}_{p=1,2,\ldots}$ all having $\mu(\Cone(T_p))=1$
with $\ell(T_p)$ growing arbitrarily large?
\end{quote}
Our last result answers this affirmatively, using the
{\it left comb tree} $T_p$ on $2p-1$ vertices; $T_6$ is shown here:

\begin{center}
\begin{tikzpicture}[scale=.4]
\tikzstyle{every node}=[draw,circle,minimum size=1pt,inner sep=.5pt]
    %\node[color=red](v0) at (2,2){$v_0$};
    \node(v1) at (0,0){$\pi_1$};
    \node[](v2) at (1,1.5){$\pi_2$};
    \node[](v3) at (2,3){$\pi_3$};
    \node[](v4) at (3,4.5){$\pi_4$};
    \node[](v5) at (4,6){$\pi_5$};
    \node[](v6) at (5,7.5){$\pi_6$};
    \node[](v7) at (2,0){$\ell_1$};
    \node[](v8) at (3,1.5){$\ell_2$};
    \node[](v9) at (4,3){$\ell_3$};
    \node[](v10) at (5,4.5){$\ell_4$};
    \node[](v11) at(6,5.5){$\ell_5$};
    \draw(v1)--(v2)--(v3)--(v4);
    \draw (v4)--(v5)--(v6);
    %\draw(v0)--(v1);
    \draw(v2)--(v7);
    \draw(v3)--(v8);
    \draw(v4)--(v9);
    \draw(v5)--(v10);
    \draw(v6)--(v11);
  \tikzstyle{every node}=[]
\end{tikzpicture}
\end{center}

\begin{theorem}\label{comb-tree-theorem}
    For any $p \geq 2$ one has $\mu(\Cone(T_p)) = 1$, but $\ell(T_p)=p$.
\end{theorem}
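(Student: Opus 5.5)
The count $\ell(T_p)=p$ is immediate: the leaves are $\pi_1$ and $\ell_1,\dots,\ell_{p-1}$, where $\ell_i$ hangs off $\pi_{i+1}$. All the work is in showing $\mu(\Cone(T_p))=1$. I plan to use the reduced Laplacian $\overline{L}$ obtained by deleting the cone vertex $v_0$. This is the $(2p-1)\times(2p-1)$ matrix $\overline{L}=L_{T_p}+I$. Its diagonal entries are $2$ at the leaves $\pi_1,\ell_1,\dots,\ell_{p-1}$, $3$ at $\pi_p$, and $4$ at $\pi_2,\dots,\pi_{p-1}$, with $-1$ for each tree edge. By \eqref{sandpile-group-presentation}, $K(\Cone(T_p))=\coker \overline{L}$. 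This group is cyclic exactly when the second-to-last invariant factor of the Smith normal form equals $1$, that is, when the gcd of all $(2p-2)\times(2p-2)$ minors of $\overline{L}$ is $1$. So the goal is to exhibit two minors, or a single one, with gcd $1$.

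My first attempt will be a direct generator chase inside $K$. I take $x=\overline{e}_{\ell_{p-1}}$ and use the relations $2\overline{e}_{\ell_i}=\overline{e}_{\pi_{i+1}}$, $2\overline{e}_{\pi_1}=\overline{e}_{\pi_2}$, $3\overline{e}_{\pi_p}=\overline{e}_{\pi_{p-1}}+\overline{e}_{\ell_{p-1}}$, and $4\overline{e}_{\pi_{i}}=\overline{e}_{\pi_{i-1}}+\overline{e}_{\pi_{i+1}}+\overline{e}_{\ell_{i-1}}$. These give $\overline{e}_{\pi_p}=2x$ and $\overline{e}_{\pi_{p-1}}=5x$. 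After that, however, the chase meets relations such as $2\overline{e}_{\ell_{p-2}}=5x$, which need division by $2$. This is exactly where the naive triangular elimination breaks down: the leaf relations have coefficient $2$ on the new variable. So I expect to switch to the minor computation.

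For the minors, I would order the vertices $\pi_1,\ell_1,\pi_2,\ell_2,\dots$ so that $\overline{L}$ becomes a banded, nearly block-tridiagonal matrix. I then delete the row of a leaf, for instance $\ell_{p-1}$ or $\pi_1$, together with a suitably chosen column, and expand along the resulting pendant rows. This should produce integer sequences $M_p$ and $M'_p$ satisfying linear recurrences in $p$. I expect these to be Fibonacci-type recurrences, as with $b_n$ and Theorem~\ref{old-theorem_in_RS}, since contracting a leaf and its neighbour reproduces the same shape with $p$ decreased by one. In the same way Lemma~\ref{old-theorem-used-as-lemma} yields recurrences for $\tau$, with a unit constant term. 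I would then show $\gcd(M_p,M'_p)=1$ by the Euclidean-algorithm argument used for consecutive Fibonacci numbers. The recurrence lets one step $\gcd(M_p,M'_p)$ down to $\gcd(M_{p-1},M'_{p-1})$, and the base case $p=2$ can be checked by hand: $T_2$ is a path on $3$ vertices, whose cone has $K\cong\Z_8$, which is cyclic.

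The main obstacle is choosing which row and column to delete so that the recurrence closes up on a small number of sequences and visibly preserves coprimality; the specific pair of minors above is a guess. As a fallback, I would apply explicit unimodular row and column operations that peel off the pair $(\pi_1,\ell_1)$. These fold the bottom two rows into a modified diagonal entry at $\pi_2$, giving $\overline{L}\sim I_2\oplus N_{p-1}$, where $N_{p-1}$ has the shape of $\overline{L}$ for $T_{p-1}$ except for one modified corner entry. Induction on $p$ would then reduce cyclicity to the fact that a $2\times 2$ terminal block has a unit entry, or coprime entries.
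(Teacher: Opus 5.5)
\textbf{There is a genuine gap.} Your reduction is sound. You have $\ell(T_p)=p$ and $\overline{L}=L_{T_p}+I$, and $K(\Cone(T_p))$ is cyclic iff the $(2p-2)\times(2p-2)$ minors of $\overline{L}$ have gcd $1$. But the proposal stops exactly where the proof has to start.

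You never say which two minors you use, and you never compute them. The recurrence-plus-Euclid plan is only anticipated (``should produce'', ``I expect'', ``is a guess''). Since no recurrence is actually derived, there is nothing to run the Euclidean step on.

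The fallback also does not work as described. A diagonal entry $2$ at $\pi_1$ or $\ell_1$ is not a unimodular pivot. ``Folding'' those two vertices into a modified entry at $\pi_2$ is really the rational Schur complement $4-\tfrac12-\tfrac12$. The unit pivots actually available are the $-1$'s. Using them eliminates $\pi_2$ together with one of the two leaves $\pi_1,\ell_1$ hanging at $\pi_2$. The surviving leaf and $\pi_3$ then carry the corner block $\begin{pmatrix}12&-2\\-2&4\end{pmatrix}$ (for $p\ge 4$), not $T_{p-1}$'s $\begin{pmatrix}2&-1\\-1&4\end{pmatrix}$ with one entry altered. So you have not set up a family closed under the peeling step, and hence no induction hypothesis.

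The missing idea is that no recurrence is needed if the two minors are chosen so that their coprimality is visible by parity; this is what the paper does. Your instinct to delete the row of the leaf $\pi_1$ is right; the point is the choice of columns.

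\begin{itemize}
\item Delete row $\pi_1$ and column $\ell_{p-1}$. Order the rows as $\pi_2,\ell_1,\pi_3,\ell_2,\dots,\pi_{p-1},\ell_{p-2},\pi_p,\ell_{p-1}$ and the columns as $\pi_1,\ell_1,\pi_2,\ell_2,\dots,\pi_{p-2},\ell_{p-2},\pi_{p-1},\pi_p$. The matrix becomes upper triangular with diagonal $-1,2,-1,2,\dots,-1,2,-1,-1$, so this minor is $\pm 2^{p-2}$.
\item Delete row and column $\pi_1$ instead. Modulo $2$ the matrix becomes $\begin{pmatrix}P&I_{p-1}\\ I_{p-1}&0\end{pmatrix}$, where the identity blocks pair $\pi_{i+1}$ with $\ell_i$. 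Its determinant is $\pm 1$, so this minor is odd.
\end{itemize}

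An odd integer and a power of $2$ are coprime. Hence the gcd of the $(2p-2)$-minors is $1$, and $K(\Cone(T_p))$ is cyclic.
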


\vspace{.5cm}

\section{Proof of Theorem~\ref{spanning-tree-number-theorem}} 
\label{tau section}

We recall here the statement of Theorem~\ref{spanning-tree-number-theorem}.

\vskip.1in
\noindent
\textbf{Theorem~\ref{spanning-tree-number-theorem}.}
{\it For $p \geq 1$ and $s_1,s_2 \geq 1$, the cone over the bi-coconut tree $T(p,s_1,s_2)$ has 
$$
\tau(\Cone(T(p,s_1,s_2)))=t(p,s_1,s_2)
:= 2^{s_2-1}(2b_{2p-3}^{(s_1)} +s_2 b_{2p-4}^{(s_1)}).
$$
}
\vskip.1in

\begin{proof}
  We proceed by induction on $s_1.$ When $s_1 = 1$ we have 
\begin{align}
    \tau(\Cone(T(p,1,s_2) ))
                            & = \tau(\Cone(\CT{p+1}{s_2})) \notag \\
                            \label{statement2}
                            & = 2^{s_2-1}(2\cdot F_{2p+2} +s_2(F_{2p+1})) \\   
                            & = 2^{s_2-1}(2\cdot b_{2p-3}^{(1)} +s_2b_{2p-4}^{(1)})\label{statement3}
\end{align}
as desired: equation \eqref{statement2} follows from  \eqref{eq:1} letting $s_2 = s$; and, equation \eqref{statement3} follows from the definition of $b_n^{(s_1)}$ described in \eqref{b-F-relation} when $s_1 = 1.$ 
Next we apply equation \eqref{thm 1.5-1} in Lemma~\ref{old-theorem-used-as-lemma}.  That is, we select, without loss of generality, any leaf in $T(p,s_1,s_2)$ in the appended left star and let its unique neighbor be $\pi_1,$ the first vertex in the path of $T(p,s_1,s_2)$, obtaining:
\begin{align}
   \tau(\Cone(T(p,s_1,s_2) )) 
   &= \tau(\Cone(T(p,s_1-1,s_2) ))+ \tau(Cone^{(+\pi_1)}(T(p,s_1-1,s_2) )) \label{spanning_tree_equality}.   
    \end{align}
     For example, the spanning tree number of the graph $\Cone(T(5,5,4))$  equals the sum of the spanning tree number of the  2 graphs below it: 
\vspace{.1cm}
   
\begin{center}
%first fig
\begin{figure}[H]
\begin{tikzpicture}[scale = 0.95]
 
    \tikzstyle{every node}=[draw,circle,minimum size=4pt,inner sep=2.5pt]
    \node[](v0'') at (-1.3,.4){};
    \node[](v0') at (-1.3,-.4){};
    \node[](v0) at (2,2){$v_0$};
    \node[color=red](v1) at (0,0){$\pi_1$};
    \node[](v2) at (1,0){};
    \node[](v3) at (2,0){};
    \node[](v4) at (3,0){};
    \node[](v5) at (4,0){};
    \node[](v6) at (5,.75){};
    \node[](v7) at (5,-.75){};
    \node[](v8) at (5.2,-.5){};
    \node[](v9) at (5.2,.5){};
     \node[color=red](v10) at(-1.3,1){$\ell$};
     \node[](v11) at (-1.3,0){};
     \node[](v12) at (-1.0,.-.75){};
    
    \draw(v1)--(v2)--(v3)--(v4);
    
    \draw (v4)--(v5)--(v6);
    \draw(v9)--(v5)--(v7);
    \draw(v5)--(v8);
   \draw(v0')--(v1);
   \draw(v0'')--(v1);
    \draw[color=red](v10)--(v1);
    \draw(v11)--(v1);
    \draw(v12)--(v1);
     \def\C{(5.75,1)} 
     \def\D{(5.75,-1)} 
     \def\E{(0,-.75)} 
     \def\F{(4,-.75)} 

     % connect cone
 
      \draw[color=red](v0)--(v1);
      \draw(v0)--(v0'');
      \draw(v0)--(v0');
      \draw(v0)--(v2);
      \draw(v0)--(v3);
      \draw(v0)--(v4);
      \draw(v0)--(v5);
      \draw(v0)--(v6);
      \draw(v0)--(v7);
      \draw(v0)--(v8);
      \draw(v0)--(v9);
      \draw[color=red](v0)--(v10);
      \draw(v0)--(v11);
      \draw(v0)--(v12);

  \tikzstyle{every node}=[]
    
  \coordinate (A) at (6,0);
   \coordinate (B) at (2,-1);
  
 \draw[] (A) circle (0);

  \end{tikzpicture}

 \end{figure}

%second fig
\begin{multicols}{2}
\begin{figure}[H]
\begin{tikzpicture}[scale = 0.95]
    \tikzstyle{every node}=[draw,circle,minimum size=4pt,inner sep=2.5pt]
    \node[](v0'') at (-1.3,.4){};
    \node[](v0') at (-1.3,-.4){};
    \node[](v0) at (2,2){$v_0$};
    \node[color=red](v1) at (0,0){$\pi_1$};
    \node[](v2) at (1,0){};
    \node[](v3) at (2,0){};
    \node[](v4) at (3,0){};
    \node[](v5) at (4,0){};
    \node[](v6) at (5,.75){};
    \node[](v7) at (5,-.75){};
    \node[](v8) at (5.2,-.5){};
    \node[](v9) at (5.2,.5){};
    \node[color=red](v10) at(-1.3,1){$\ell$};
     \node[](v11) at (-1.3,0){};
     \node[](v12) at (-1.0,.-.75){};
    
    \draw(v1)--(v2)--(v3)--(v4);
    
    \draw (v4)--(v5)--(v6);
    \draw(v9)--(v5)--(v7);
    \draw(v5)--(v8);
   \draw(v0')--(v1);
   \draw(v0'')--(v1);
    \draw[color=red](v10)--(v1);
    \draw(v11)--(v1);
    \draw(v12)--(v1);
     \def\C{(5.75,1)} 
     \def\D{(5.75,-1)} 
     \def\E{(0,-.75)} 
     \def\F{(4,-.75)} 

     % connect cone
      \draw[color=red](v0)--(v1);
      \draw(v0)--(v0'');
      \draw(v0)--(v0');
      \draw(v0)--(v2);
      \draw(v0)--(v3);
      \draw(v0)--(v4);
      \draw(v0)--(v5);
      \draw(v0)--(v6);
      \draw(v0)--(v7);
      \draw(v0)--(v8);
      \draw(v0)--(v9);
      
      \draw(v0)--(v11);
      \draw(v0)--(v12);

  \tikzstyle{every node}=[]
    
  \coordinate (A) at (6,0);
   \coordinate (B) at (2,-1);
  
 \draw[] (A) circle (0);
 
\end{tikzpicture}
\end{figure}

%3rd figure
\begin{figure}[H]
\begin{tikzpicture}[scale = 0.95]
   
    \tikzstyle{every node}=[draw,circle,minimum size=4pt,inner sep=2.5pt]
    \node[](v0'') at (-1.3,.4){};
    \node[](v0') at (-1.3,-.4){};
    \node(v0) at (2,2){$v_0\ell$};
    \node[color=red](v1) at (0,0){$\pi_1$};
    \node[](v2) at (1,0){};
    \node[](v3) at (2,0){};
    \node[](v4) at (3,0){};
    \node[](v5) at (4,0){};
    \node[](v6) at (5,.75){};
    \node[](v7) at (5,-.75){};
    \node[](v8) at (5.2,-.5){};
    \node[](v9) at (5.2,.5){};
     %\node[color=red](v10) at(-1.0,.75){$\ell$};
     \node[](v11) at (-1.3,0){};
     \node[](v12) at (-1.0,.-.75){};
    
    \draw(v1)--(v2)--(v3)--(v4);
    
    \draw (v4)--(v5)--(v6);
    \draw(v9)--(v5)--(v7);
    \draw(v5)--(v8);
   \draw(v0')--(v1);
   \draw(v0'')--(v1);
    %\draw[color=red](v10)--(v1);
     \draw [color=red][bend left] (v1) to (v0);
    \draw [color=red][bend right] (v1) to (v0);
    \draw(v11)--(v1);
    \draw(v12)--(v1);
     \def\C{(5.75,1)} 
     \def\D{(5.75,-1)} 
     \def\E{(0,-.75)} 
     \def\F{(4,-.75)} 

     % connect cone
      %\draw[color=red](v0)--(v1);
      \draw(v0)--(v0'');
      \draw (v0)--(v0');
      \draw(v0)--(v2);
      \draw(v0)--(v3);
      \draw(v0)--(v4);
      \draw(v0)--(v5);
      \draw(v0)--(v6);
      \draw(v0)--(v7);
      \draw(v0)--(v8);
      \draw(v0)--(v9);
      \draw(v0)--(v11);
      \draw(v0)--(v12);

  \tikzstyle{every node}=[]
    
  \coordinate (A) at (6,0);
   \coordinate (B) at (2,-1);
  
 \draw[] (A) circle (0);

  \end{tikzpicture}
  
  \end{figure}
  \end{multicols}
\end{center}

\noindent Now delete and contract one of the double edges of 
$\Cone^{(+\pi_1)}(T(p,s_1-1,s_2) )$, as pictured for $\Cone^{(+\pi_1)}(T(5,4,4) )$:
\vspace{.1cm}
     %3rd figure repeated
\begin{center}

\begin{figure}[H]
\begin{tikzpicture}
  
    \tikzstyle{every node}=[draw,circle,minimum size=4pt,inner sep=2.5pt]
    \node[](v0'') at (-1.3,.4){};
    \node[](v0') at (-1.3,-.4){};
    \node(v0) at (2,2){$v_0\ell$};
    \node[color=red](v1) at (0,0){$\pi_1$};
    \node[](v2) at (1,0){};
    \node[](v3) at (2,0){};
    \node[](v4) at (3,0){};
    \node[](v5) at (4,0){};
    \node[](v6) at (5,.75){};
    \node[](v7) at (5,-.75){};
    \node[](v8) at (5.2,-.5){};
    \node[](v9) at (5.2,.5){};
     \node[](v11) at (-1.3,0){};
     \node[](v12) at (-1.0,.-.75){};
    
    \draw(v1)--(v2)--(v3)--(v4);
    
    \draw (v4)--(v5)--(v6);
    \draw(v9)--(v5)--(v7);
    \draw(v5)--(v8);
   \draw(v0')--(v1);
   \draw(v0'')--(v1);
     \draw [color=red][bend left] (v1) to (v0);
    \draw [color=red][bend right] (v1) to (v0);
    \draw(v11)--(v1);
    \draw(v12)--(v1);
     \def\C{(5.75,1)} 
     \def\D{(5.75,-1)} 
     \def\E{(0,-.75)} 
     \def\F{(4,-.75)} 

     % connect cone
    
      \draw(v0)--(v0'');
      \draw (v0)--(v0');
      \draw(v0)--(v2);
      \draw(v0)--(v3);
      \draw(v0)--(v4);
      \draw(v0)--(v5);
      \draw(v0)--(v6);
      \draw(v0)--(v7);
      \draw(v0)--(v8);
      \draw(v0)--(v9);
      \draw(v0)--(v11);
      \draw(v0)--(v12);

  \tikzstyle{every node}=[]
    
  \coordinate (A) at (6,0);
   \coordinate (B) at (2,-1);
  
 \draw[] (A) circle (0);

  \end{tikzpicture}
 
  \end{figure}
\end{center}
\newpage

\begin{multicols}{2}

\begin{figure}[H]
\begin{tikzpicture}
   
    \tikzstyle{every node}=[draw,circle,minimum size=4pt,inner sep=2.5pt]
    \node[](v0'') at (-1.3,.4){};
    \node[](v0') at (-1.3,-.4){};
    \node(v0) at (2,2){$v_0\ell$};
    \node[color=red](v1) at (0,0){$\pi_1$};
    \node[](v2) at (1,0){$\pi_2$};
    \node[](v3) at (2,0){};
    \node[](v4) at (3,0){};
    \node[](v5) at (4,0){};
    \node[](v6) at (5,.75){};
    \node[](v7) at (5,-.75){};
    \node[](v8) at (5.2,-.5){};
    \node[](v9) at (5.2,.5){};
     
     \node[](v11) at (-1.3,0){};
     \node[](v12) at (-1.0,.-.75){};
    
    \draw(v1)--(v2)--(v3)--(v4);
    
    \draw (v4)--(v5)--(v6);
    \draw(v9)--(v5)--(v7);
    \draw(v5)--(v8);
   \draw(v0')--(v1);
   \draw(v0'')--(v1);
   
    \draw [color=red][bend right] (v1) to (v0);
    \draw(v11)--(v1);
    \draw(v12)--(v1);
     \def\C{(5.75,1)} 
     \def\D{(5.75,-1)} 
     \def\E{(0,-.75)} 
     \def\F{(4,-.75)} 

     % connect cone
      %\draw[color=red](v0)--(v1);
      \draw(v0)--(v0'');
      \draw (v0)--(v0');
      \draw(v0)--(v2);
      \draw(v0)--(v3);
      \draw(v0)--(v4);
      \draw(v0)--(v5);
      \draw(v0)--(v6);
      \draw(v0)--(v7);
      \draw(v0)--(v8);
      \draw(v0)--(v9);
      %\draw[color=red](v0)--(v10);
      \draw(v0)--(v11);
      \draw(v0)--(v12);

  \tikzstyle{every node}=[]
    
  \coordinate (A) at (6,0);
   \coordinate (B) at (2,-1);
  
 \draw[] (A) circle (0);

  \end{tikzpicture}

\end{figure}

\begin{figure}[H]
\begin{tikzpicture}
  
    \tikzstyle{every node}=[draw,circle,minimum size=4pt,inner sep=2.5pt]
    \node[](v0'') at (-1.3,1.4){};
    \node[](v0') at (-1.3,-.2){};
    \node(v0) at (2,2){$v_0\ell\pi_1$};
    
    \node[](v2) at (1,0){$\pi_2$};
    \node[](v3) at (2,0){};
    \node[](v4) at (3,0){};
    \node[](v5) at (4,0){};
    \node[](v6) at (5,.75){};
    \node[](v7) at (5,-.75){};
    \node[](v8) at (5.2,-.5){};
    \node[](v9) at (5.2,.5){};
     \node[](v11) at (-1.3,1){};
     \node[](v12) at (-1.3,.65){};
    
    \draw(v2)--(v3)--(v4);
    
    \draw (v4)--(v5)--(v6);
    \draw(v9)--(v5)--(v7);
    \draw(v5)--(v8);
     \def\C{(5.75,1)} 
     \def\D{(5.75,-1)} 
     \def\E{(0,-.75)} 
     \def\F{(4,-.75)} 

     % connect cone
      %\draw[color=red](v0)--(v1);
      \draw[bend right](v0) to (v0'');
      \draw[color=red][bend left](v0) to (v0'');

      \draw[bend right](v0) to (v0');
      \draw[color=red][bend left](v0) to (v0');
      
     \draw[bend right](v0) to (v11);
      \draw[color=red][bend left](v0) to (v11);

      \draw[bend right](v0) to (v12);
      \draw[color=red][bend left](v0) to (v12);
      \draw(v0)--(v2);
      \draw(v0)--(v3);
      \draw(v0)--(v4);
      \draw(v0)--(v5);
      \draw(v0)--(v6);
      \draw(v0)--(v7);
      \draw(v0)--(v8);
      \draw(v0)--(v9);

  \tikzstyle{every node}=[]
    
  \coordinate (A) at (6,0);
   \coordinate (B) at (2,-1);
  
 \draw[] (A) circle (0);

\draw [color=red] (v0) to [loop right] ();
  \end{tikzpicture}
\end{figure}
\end{multicols}

     Then equation \eqref{spanning_tree_equality} becomes
    
    \begin{align}
     \tau(\Cone(T(p,s_1,s_2) ))
    &= 2\tau(\Cone(T(p,s_1-1,s_2) ))+ 2^{s_1-1}\tau (\Coneplus{\pi_1}(\CT{p-1}{s_2}))\\
\hspace{4.8cm} &= 2\tau(\Cone(T(p,s_1-1,s_2) ))+ 2^{s_1-1}(2^{s_2-1}(2F_{2p}+(s_2-2)F_{2p-2}))\label{induct 2.7-2}\\
   &= 2\cdot 2^{s_2-1}(2\cdot b_{2p-3}^{(s_1-1)} +s_2b_{2p-4}^{(s_1-1)})+2^{s_1+s_2-2}(2F_{2p}+(s_2-2)F_{2p-2})\label{induct 2.7-3}.
   \end{align}
  Equation \eqref{induct 2.7-2} follows from  equation \eqref{eq:2}; and, \eqref{induct 2.7-3} follows from substituting the value from equation \eqref{eq:1}. We now hope to show that the right side of equation \eqref{induct 2.7-3}  equals $2^{s_2-1}(2b_{2p-3}^{(s_1)} +s_2b_{2p-4}^{(s_1)})$  as in the right side of Theorem~\ref{spanning-tree-number-theorem}. That is, our goal is to prove \begin{align}\label{spanning-simultateneous-induction}
2b_{2p-3}^{(s_1)} +s_2b_{2p-4}^{(s_1)} &= 2(2b_{2p-3}^{(s_1-1)} +s_2b_{2p-4}^{(s_1-1)})+2^{s_1-1}(2F_{2p} +(s_2-2)F_{2p-2})
\end{align}
Equation \eqref{spanning-simultateneous-induction} would be implied by these two stronger assertions.
\begin{align}
b_{2p-4}^{(s_1)}&=  2b_{2p-4}^{(s_1-1)} + 2^{s_1-1}(F_{2p-2})\label{b_n neven}\\ 
b_{2p-3}^{(s_1)}&=  2b_{2p-3}^{(s_1-1)} + 2^{s_1-1}(F_{2p}-F_{2p-2})
\label{b_n n odd}\end{align}
We now prove \eqref{b_n neven} and \eqref{b_n n odd} by  simultaneous induction on $p$.  

For the sake of checking base cases, when $p=1,2$, one can check that  \eqref{b_n neven} and \eqref{b_n n odd}  respectively become tautologies, and 
one can also directly check that \eqref{b_n neven} and \eqref{b_n n odd} hold for $p= 3,4.$

In the inductive step, letting $p=k+1$, then  \eqref{b_n neven}  yields
\begin{align*}
b_{2p-4}^{(s_1)} = b_{2k-2}^{(s_1)}:&=  b_{2k-3}^{(s_1)} + b_{2k-4}^{(s_1)} \\ 
& =  2b_{2k-3}^{(s_1-1)} + 2^{s_1-1}(F_{2k}-F_{2k-2})+2b_{2k-4}^{(s_1-1)} + 2^{s_1-1}(F_{2k-2})\\
& =  2b_{2k-2}^{(s_1-1)} + 2^{s_1-1}F_{2k}\\ 
& =  2b_{2p-4}^{(s_1-1)} + 2^{s_1-1}F_{2p-2} 
\end{align*}
as desired. Similarly,  \eqref{b_n n odd} yields
\begin{align*}
b_{2p-3}^{(s_1)} = b_{2k-1}^{(s_1)}:&=  b_{2k-2}^{(s_1)} + b_{2k-3}^{(s_1)} \\ 
& =  2b_{2k-2}^{(s_1-1)} + 2^{s_1-1}(F_{2k})+b_{2k-3}^{(s_1-1)} + 2^{s_1-1}(F_{2k}-F_{2k-2})\\
& = 2b_{2k-2}^{(s_1-1)} + 2b_{2k-3}^{(s_1-1)}+2^{s_1-1}(F_{2k}+F_{2k}-F_{2k-2}).
\end{align*}
\vspace{1cm}
It remains to show 
    \hspace{1.4cm}$2F_{2k}-F_{2k-2} = F_{2k+2} -F_{2k}.$
    
\noindent
Observe the following:
\begin{align*}
F_{2k+1} &= F_{2k} + F_{2k-1} +F_{2k-2} - F_{2k-2}\\
F_{2k+1} +F_{2k} &= 3F_{2k} - F_{2k-2}\\
F_{2k+2} -F_{2k} &= 2F_{2k} - F_{2k-2}
\end{align*} as desired. Thus 
\begin{align*}
b_{2k-1}^{(s_1)}&= 2b_{2k-1}^{(s_1-1)}+2^{s_1-1}(F_{2k+2} -F_{2k})\\ 
\end{align*} 
and hence
\begin{align*} 
b_{2p-3}^{(s_1)}&=  2b_{2p-3}^{(s_1-1)} + 2^{s_1-1}(F_{2p}-F_{2p-2})
\end{align*} 
This completes the proof of Theorem~\ref{spanning-tree-number-theorem}.
\end{proof}

%%%%%%%%%%%%%%%%%%
\section{Proof of Theorem~\ref{generating-function-theorem}}
\label{gen function}

We recall the statement of the theorem.
\vskip.1in
\noindent
{\bf Theorem~\ref{generating-function-theorem}.}
{\it 
For $s_1,s_2 \geq 1$,  $t(p,s_1,s_2)$ has
ordinary generating function in $p$ given by
$$
\displaystyle\sum_{p=1}^\infty t(p, s_1, s_2)x^{p-1}
=2^{s_1+s_2-1} \cdot \displaystyle\frac{4 + 2(s_1+s_2)(1-x) +s_1s_2x}{1-3x+x^2}.
$$ 
In particular, this function is symmetric in $s_1,s_2$, so the same holds for $t(p,s_1,s_2)$.
}

\begin{proof}
Fix $s_1$ and define $b_n$ as in equations $\eqref{b_n},\eqref{b_-2}, \eqref{b_-1}\footnote{There are no superscripts on $b_n;$ it should be assumed that all are functions of $s_1$.}.$ Then

\begin{align}
    B(x) &:= b_{-2} + b_{-1}x+b_0x^2+\cdots\label{series}
        = \displaystyle\sum^\infty_{n=-2}b_nx^{n+2}\\
        &= b_{-2} + b_{-1}x+\displaystyle\sum^\infty_{n=0}b_nx^{n+2} \notag \\
        &= b_{-2} + b_{-1}x+\displaystyle\sum^\infty_{n=0}b_{n-1}x^{n+2} + \displaystyle\sum^\infty_{n=0}b_{n-2}x^{n+2}\notag\\
        &= b_{-2} + b_{-1}x+x\displaystyle\sum^\infty_{n=0}b_{n-1}x^{n+1} + x^2\displaystyle\sum^\infty_{n=0}b_{n-2}x^{n} \notag
\end{align}

Hence equation \eqref{series} becomes 
\begin{align}
    B(x) &= b_{-2}+b_{-1}x+x(B(x)-b_{-2})+ x^2B(x) \label{eq:functional-equation1}\\
     B(x)-xB(x)-x^2B(x) &= b_{-2}+b_{-1}x+-b_{-2}x\label{eq:functional-equation2}\\
    B(x)&= \displaystyle\frac{b_{-2}+(b_{-1}-b_{-2})x}{1-x-x^2}\label{eq:functional-equation-solved}\\
    &= \displaystyle\frac{2^{s_1-1}(2+s_1x)}{1-x-x^2}\label{closed_form}
\end{align}
where this last equation
\eqref{closed_form} results from using the initial values  \eqref{b_-2}, \eqref{b_-1}.

Next recall from Definition~\ref{t-definition} that  
\begin{equation*}
t(p,s_1,s_2):=2^{s_2-1}(2\cdot b_{2p-3} +s_2b_{2p-4})
\end{equation*} for $p \geq 1$ and $s_1,s_2\geq 1$. Define its generating function in $p$ as
\begin{align}
    T(x) &:= \displaystyle\sum_{p=1}^\infty t(p,s_1,s_2)x^{p-1}\notag\\
      &=\displaystyle\sum_{p=1}^\infty 2^{s_2-1}(2\cdot b_{2p-3} +s_2b_{2p-4})x^{p-1}\notag\\
       &=2^{s_2-1}\left(2\displaystyle\sum_{p=1}^\infty  (b_{2p-3} )x^{p-1} + s_2\displaystyle\sum_{p=1}^\infty b_{2p-4}x^{p-1}\right)\notag\\
       &=2^{s_2-1}\left(2(b_{-1} + b_1x + b_3x^2 +b_5x^3+\cdots) + s_2(b_{-2} + b_0x + b_2x^2 +b_4x^3+\cdots)\right)\label{sumof2series}\\
       &= 2^{s_2-1}\left[2\cdot\displaystyle\frac{1}{2}\left[\displaystyle\frac{B(x)-B(-x)}{x} \right]_{x \mapsto x^{1/2}} + s_2\cdot\displaystyle\frac{1}{2}
       \left[\displaystyle\frac{B(x)+B(-x)}{x}\right]_{x \mapsto x^{1/2}} \right] \label{multi-series} \\
       &= 2^{s_1+s_2-2}\left[\displaystyle\frac{4+2(s_1+s_2)(1-x)+s_1s_2x}{1-3x+x^2}\right].
       \label{symmetry}
\end{align}
Here equation \eqref{multi-series} was obtained  by taking the bisection of the series in \eqref{sumof2series} and substituting $x^{1/2}$ for $x.$ Equation \eqref{symmetry} was then obtained by substituting \eqref{closed_form} for $B(x)$ in \eqref{multi-series}, followed by  some further algebraic simplication.  Note that the result in \eqref{symmetry} is symmetric in $s_1$ and $s_2.$
\end{proof}

\section{Proof of Theorem~\ref{sandpile-group-theorem}}\label{group structure}

Recall that for positive integers $p,s_1,s_2$,
the bi-coconut tree $T(p,s_1,s_2)$
is obtained from a path having $p$ vertices $\pi_1,\pi_2,\ldots,\pi_p$ by adding $s_1$ leaves attached to the
endvertex $\pi_1$ and $s_2$ leaves attached to
the end vertex $\pi_p$.  We label the leaves
as illustrated in the diagram of $T(6,5,4)$ shown below:
\begin{center}
  \input{drawing3}
    \label{fig:labeled bi-coconuttree}
\end{center}

\medskip
Recall the statement of the theorem.

\vskip.1in
\noindent \textbf{Theorem~\ref{sandpile-group-theorem}}
\textit{For $p \geq 2$
and $s_1,s_2 \geq 1$, the cone over the bi-coconut tree $T(p,s_1,s_2)$ 
has the following sandpile group structure:}
\begin{align}
    K(\Cone(T(p,s_1,s_2) )) 
                             \cong                                
\begin{cases}
    \Z_2^{s_1+s_2-2} \oplus \Z_a 
    & \text{ if } p \equiv 1 \bmod{3},\\
    \Z_2^{s_1+s_2-3} \oplus \Z_{2a}   
    & \text{ if }s_1   \text{ or } s_2 \text{ odd } \text{ and }p \not\equiv 1 \bmod{3},\\
    \Z_2^{s_1+s_2-4} \oplus \Z_4\oplus\Z_{a}  
    & \text{ if }s_1 \text{ and } s_2  \text{ even  and }p \not\equiv 1 \bmod{3}.
\end{cases}\label{main thm}\notag
\end{align}
\textit{Here $a$ is defined as in \eqref{formula-for-a} via this formula
\begin{equation*}
%\label{formula-for-a}
    a =2^{1-s_1}(2\cdot b_{2p-3} +s_2b_{2p-4})
    =\frac{t(p,s_1,s_2)}{2^{s_1+s_2-2}},
\end{equation*}
and $b_n = b_{n-1} + b_{n-2}$ with $b_{-2} = 2^{s_1}, 
b_{-1} = 2^{s_1-1}(s_1+2)$, was defined\footnote{We have omitted the superscripts on $b_n=b_n^{(s_1)}$ here, but it should be assumed that all are functions of $s_1$.}
in \eqref{b_n},\eqref{b_-2},\eqref{b_-1}}.

\subsection{Proof outline}
 
The proof of Theorem~\ref{sandpile-group-theorem} proceeds in several steps, outlined here.

Recall that $v_0$ denotes the cone vertex in the graph $G=\Cone(T)$ on vertex set $V$.  Let $\{e_v\}$ be the standard basis vectors
for $\Z^{V - \{v_0\}}$, and denote by
$\{ \overline{e}_v \}$ their images within
the quotient presentation
of the sandpile group 
$K(G)=\Z^{V-\{v_0\}}/\im \overline{L}_G$
as described in equation \eqref{sandpile-group-presentation}. 

As mentioned in the Introduction, a crucial role is played by the following result on generators for $K(G)$ when $G=\Cone(T)$ for any tree $T$.

\vskip.1in
\noindent
{\bf Theorem~\ref{generators}.} \cite{Reiner2024SandpileGF} 
{\it Choosing the cone vertex of $\Cone(T)$ as the root 
vertex $v_0$, the group $K(\Cone(T))$
is generated by 
$
\{\overline{e}_v: \text{ leaves }v \neq v_1\}
$
for any choice of a leaf vertex $v_1$ in $T$.
}
\vskip.1in

In particular, when $T=T(p,s_,s_2)$, we label the $s_1$ leaf vertices as $\sigma_{(1,1)},\sigma_{(1,2)},\ldots,\sigma_{(1,s_1)},$  the  $p$ vertices in the path as $\pi_1,\pi_2,\ldots,\pi_p$ and the  $s_2$ leaf vertices as $\sigma_{(2,1)},\sigma_{(2,2)},\ldots,\sigma_{(2,s_2)}$ as in Figure \ref{fig:labeled bi-coconuttree}.  

Applying Theorem~\ref{generators},
one concludes that the images of the basis
elements indexed by all of the leaves
\begin{equation}
\label{leaf-images}
\bar{e}_{\sigma_{(1,1)}},
\bar{e}_{\sigma_{(1,2)}},
\ldots
\bar{e}_{\sigma_{(1,s_1)}},
\bar{e}_{\sigma_{(2,1)}},
\ldots
\bar{e}_{\sigma_{(2,s_2)}},
\end{equation}
will generate the sandpile group $K:=K(\Cone(T(p,s_1,s_2))$.

\begin{itemize}
\item [{\sf Step 1.}]
We start by looking for relations in $K$ that hold among the  elements in \eqref{leaf-images}, first by looking for relations that also involve the
extra two elements 
$
\bar{e}_{\pi_{p-1}},
\bar{e}_{\pi_p}
$
and later eliminating them.
These relations will be encoded as the columns of a
$(2+s_1+s_2)\times (2+s_1+s_2)$ matrix $M$, shown in 
\eqref{M-matrix} and \eqref{M-abbreviated} below,
whose rows are indexed by 
\begin{equation}
\label{leaves-plus-path-endpoints}
\bar{e}_{\pi_{p-1}},
\bar{e}_{\pi_p},\bar{e}_{\sigma_{(1,1)}},
\bar{e}_{\sigma_{(1,2)}},
\ldots
\bar{e}_{\sigma_{(1,s_1)}},
\bar{e}_{\sigma_{(2,1)}},
\ldots
\bar{e}_{\sigma_{(2,s_2)}}.
\end{equation}

\item [{\sf Step 2.}]
By performing column operations on the matrix $M$, we produce
an $(s_1+s_2) \times (s_1+s_2)$ matrix $M'$, shown in \eqref{starting_matrix} below, whose columns encode relations
among only the elements in \eqref{leaf-images}.

\item [{\sf Step 3.}]
We show that $\pm \det(M')=|K|=\tau(\Cone(T(p,s_1,s_2))=t(p,s_1,s_2)$.
This is achieved 
via invertible row and column operations over $\mathbb{Z}$
that only affect $\det(M')$ up to sign, bringing it to an lower triangular form
$M''$, shown in \eqref{block-upper-triangular-matrix} below, and then checking that $\pm \det(M'')=t(p,s_1,s_2)$.
\item [{\sf Step 4.}]
We conclude $K \cong \Z^{s_1 + s_2}/\text{im}(M')$ via the following reasoning. Since Theorem~\ref{generators} implied that 
$K$ is generated by the $s_1+s_2$ elements
in \eqref{leaf-images},
this gives a surjection
$
\pi: \Z^{s_1 + s_2} \twoheadrightarrow K.
$
By construction, the columns of $M'$ are relations in $K$ satisfied by the elements of \eqref{leaf-images}, so $\pi$ 
descends to a well-defined surjection
$
\bar{\pi}: \Z^{s_1 + s_2}/\text{im}(M') \twoheadrightarrow K.
$
Then by {\sf Step 3},
this surjection $\bar{\pi}$ must be an isomorphism, since the source and target would have the same cardinality:
$
|\Z^{s_1 + s_2}/\text{im}(M')|=|\det(M')|=|K|.
$

\item [{\sf Step 5.}] We compute the Smith normal form of $M'$ in the three cases from Theorem~\ref{sandpile-group-theorem}, depending upon $p \bmod{3}$ and $s_1, s_2 \bmod{2}$,
thereby determining $K \cong \Z^{s_1+s_2}/\im(M')$.
\end{itemize}

\subsection{Step 1.}

We look for relations in $K$ among the elements in \eqref{leaf-images}, by first looking for relations that also involve the
extra two elements 
$
\bar{e}_{\pi_{p-1}},
\bar{e}_{\pi_p}
$
appearing in \eqref{leaves-plus-path-endpoints},
which will be eliminated in {\sf Step 2}.
Recall that the Laplacian, $L_G,$ is defined as the diagonal degree matrix minus the adjacency matrix of $G$, and its reduced
Laplacian $\overline{L}_G$ removes the row and column corresponding to some chosen vertex $v_0$. The columns of the reduced Laplacian
give us the defining relations for $K=K(G)=\Z^{V-v_0}/\im(\overline{L}_G)$.

For $G=\Cone(T(p,s_1,s_2))$, we choose $v_0$ to be the cone vertex. For columns corresponding to leaf vertices, we have
these relations in $K$:
\begin{align}
2\bar{e}_{\sigma_{(1,i)}} = \bar{e}_{\pi_1} \text{ for }i = 1, \dots, s_1,\label{sigma1_pi_1}\\
2\bar{e}_{\sigma_{(2,j)}} = \bar{e}_{\pi_p} \text{ for }j = 1, \dots, s_2\label{sigma2_pi_p}.  
\end{align}

For the columns corresponding to vertices $\pi_1$ and $\pi_p$ we have 
\begin{align}(s_1+2)\bar{e}_{\pi_1} &= \bar{e}_{\sigma_{(1,1)}}+
\bar{e}_{\sigma_{(1,2)}}+
\ldots
+\bar{e}_{\sigma_{(1,s_1)}}+ \bar{e}_{\pi_2}\label{column_1}\\
(s_2+2)\bar{e}_{\pi_p} &= \bar{e}_{\sigma_{(2,1)}}+
\bar{e}_{\sigma_{(2,2)}}+
\ldots
+\bar{e}_{\sigma_{(2,s_2)}}+ \bar{e}_{\pi_{p-1}}\label{column_2}
\end{align}
The columns that correspond to other vertices in the path 
($\pi_i$ for $i = 2,\ldots, p-1$ ), one has these relations:
\begin{align}
 3\bar{e}_{\pi_i}  =  \bar{e}_{\pi_{i-1}} + \bar{e}_{\pi_{i+1}}.\label{path_relation}
\end{align}

\begin{lemma}
\label{CT-trunk-lemma}
For $i=2,\ldots,p$ one has in  $K=K(\Cone(T(p,s_1,s_2)))$ that
% p-1 since this must correspond to vertices not adjacent to star
\begin{align}   
\bar{e}_{\pi_1}
&= 
(F_{2i-2} )\bar{e}_{\pi_{i-1}}-(F_{2i-4})\bar{e}_{\pi_{i}} \label{e_1}\\
\bar{e}_{\pi_2}
&= 
(F_{2i-4} )\bar{e}_{\pi_{i-1}}-(F_{2i-6}) \bar{e}_{\pi_{i}}\label{e_2} . 
\end{align}

\end{lemma}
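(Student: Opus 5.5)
The plan is a straightforward induction on $i$, driven only by the path relations \eqref{path_relation}. To make the base case work uniformly, first extend the Fibonacci numbers to nonpositive indices by the same recurrence $F_n=F_{n-1}+F_{n-2}$. With the convention $F_1=F_2=1$ this gives $F_0=0$ and $F_{-1}=1$. It then gives $F_{-2}=F_0-F_{-1}=-1$.

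\emph{Base case $i=2$.} Formula \eqref{e_1} reads $\bar e_{\pi_1}=F_2\bar e_{\pi_1}-F_0\bar e_{\pi_2}=\bar e_{\pi_1}$, which is trivially true. Formula \eqref{e_2} reads $\bar e_{\pi_2}=F_0\bar e_{\pi_1}-F_{-2}\bar e_{\pi_2}=\bar e_{\pi_2}$, also trivially true. In particular, when $p=2$ the lemma asserts nothing beyond these tautologies, and no relation of $K$ is needed.

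\emph{Inductive step.} Suppose both identities hold for some $i$ with $2\le i\le p-1$. Since $\pi_i$ is then an interior path vertex, the relation \eqref{path_relation} gives
\begin{equation*}
\bar e_{\pi_{i-1}}=3\bar e_{\pi_i}-\bar e_{\pi_{i+1}} .
\end{equation*}
Substitute this into \eqref{e_1} to obtain
\begin{equation*}
\bar e_{\pi_1}=(3F_{2i-2}-F_{2i-4})\,\bar e_{\pi_i}-F_{2i-2}\,\bar e_{\pi_{i+1}}.
\end{equation*}
This is exactly \eqref{e_1} at index $i+1$, provided that $3F_n-F_{n-2}=F_{n+2}$ with $n=2i-2$. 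That identity follows from
\begin{equation*}
F_{n+2}=F_{n+1}+F_n=2F_n+F_{n-1}=3F_n-F_{n-2},
\end{equation*}
which holds for all integers $n$ under the extended convention. The same substitution in \eqref{e_2} gives the coefficient $3F_{2i-4}-F_{2i-6}=F_{2i-2}$ on $\bar e_{\pi_i}$ and $-F_{2i-4}$ on $\bar e_{\pi_{i+1}}$. This is \eqref{e_2} at index $i+1$.

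The induction therefore runs from $i=2$ up to $i=p$; its last step uses the relation at $\pi_{p-1}$, which is still an interior vertex. I expect no real obstacle. The only care needed is bookkeeping: the Fibonacci numbers must be extended to negative indices so the base case is uniform, and the path relation may only be invoked at the interior vertices $\pi_2,\dots,\pi_{p-1}$, never at $\pi_p$.
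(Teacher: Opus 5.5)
Your proof is correct and follows essentially the same route as the paper. Both argue by induction on $i$, substituting the path relation \eqref{path_relation} at an interior vertex and reducing to the Fibonacci identity $F_{n+2}=3F_n-F_{n-2}$, which appears in the paper as $F_{2i-2}-3F_{2i-4}+F_{2i-6}=0$. Your explicit convention $F_0=0$, $F_{-2}=-1$ and your full treatment of \eqref{e_2} fill in what the paper leaves implicit: it only says \eqref{e_2} is ``similar,'' yet its $i=2$ case silently needs $F_{-2}=-1$.
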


\begin{proof} We begin with equation \eqref{e_1}.
Induct on $i$.  The base case $i=2$ asserts 
\begin{equation*}
\bar{e}_{\pi_1} =  \bar{e}_{\pi_1}\\
\end{equation*}
The base case $i=3$ asserts 

\begin{equation*}
\bar{e}_{\pi_1}=  3\bar{e}_{\pi_2} - \bar{e}_{\pi_3}
\end{equation*} which is true by equation \eqref{path_relation}.
In the inductive step, assuming equation \eqref{e_1} at  $i-1$,
which asserts 
$$
\bar{e}_{\pi_1}=(F_{2i-4} )\bar{e}_{\pi_{i-2}}-(F_{2i-6}) \bar{e}_{\pi_{i-1}},
$$
we wish to show the same assertion at $i$, which asserts
$$
\bar{e}_{\pi_1}=(F_{2i-2} )\bar{e}_{\pi_{i-1}}-(F_{2i-4})\bar{e}_{\pi_{i}}. 
$$
It therefore suffices to check this equality
$$ (F_{2i-4} )\bar{e}_{\pi_{i-2}}-(F_{2i-6}) \bar{e}_{\pi_{i-1}} \overset{?}{=}
(F_{2i-2} )\bar{e}_{\pi_{i-1}}-(F_{2i-4})\bar{e}_{\pi_{i}}
$$ 
which we verify step-by-step with justification below:
\begin{align}
\notag (F_{2i-4} )\bar{e}_{\pi_{i-2}}-(F_{2i-6}) \bar{e}_{\pi_{i-1}}  &\overset{?}{=}(F_{2i-2} )\bar{e}_{\pi_{i-1}}-(F_{2i-4})\bar{e}_{\pi_{i}} \\
\label{used-path-relation}&= (F_{2i-2} )\bar{e}_{\pi_{i-1}}-(F_{2i-4})(3\bar{e}_{\pi_{i-1}}-\bar{e}_{\pi_{i-2}}) \\
\notag-(F_{2i-6}) \bar{e}_{\pi_{i-1}} &=(F_{2i-2} )\bar{e}_{\pi_{i-1}}-(3F_{2i-4})\bar{e}_{\pi_{i-1}}\\
\notag 0&=
(F_{2i-2}-3F_{2i-4}+F_{2i-6})\bar{e}_{\pi_{i-1}}
\end{align}
where \eqref{used-path-relation} used \eqref{path_relation}. It therefore suffices to check that
$$F_{2i-2}-3F_{2i-4}+F_{2i-6}=0.$$
This is seen as follows:
\begin{align*}
F_{2i-2}-3F_{2i-4}+F_{2i-6}
&=F_{2i-2}-(F_{2i-4}+F_{2i-4}+F_{2i-4})+F_{2i-6}\\
&=F_{2i-2}-(F_{2i-4}+F_{2i-4}+F_{2i-5}+F_{2i-6})+F_{2i-6}\\
&=F_{2i-2}-(F_{2i-4}+F_{2i-4}+F_{2i-5})\\
&=F_{2i-2}-(F_{2i-4}+F_{2i-3})\\
&=F_{2i-2}-F_{2i-2}=0.
\end{align*}
The proof of \eqref{e_2} is similar.
\end{proof}

We use the  relations described above to 
write down a $(2+s_1+s_2) \times(2+s_1+s_2)$ matrix $M$, each of whose columns expresses the coefficients of a relation valid in the sandpile group $K$ among
\begin{equation}
\label{eq:s1-plus-s2-plus-2-elements}
\bar{e}_{\pi_{p-1}},
\bar{e}_{\pi_p},
\bar{e}_{\sigma_{(1,1)}},
\bar{e}_{\sigma_{(1,2)}},
\ldots
\bar{e}_{\sigma_{(1,s_1)}},
\bar{e}_{\sigma_{(2,1)}},
\ldots
\bar{e}_{\sigma_{(2,s_2)}},
\end{equation}
which index its rows. 

Note that Lemma \ref{CT-trunk-lemma} allows one to write  $\bar{e}_{\pi_{1}}$ and $\bar{e}_{\pi_{2}}$ in terms of $\bar{e}_{\pi_{p-1}}$ and $\bar{e}_{\pi_{p}}$:
evaluating equations \eqref{e_1} and \eqref{e_2} at $i=p$
results in 
\begin{align}   
\bar{e}_{\pi_1}
&= 
F_{2p-2} \bar{e}_{\pi_{p-1}}-F_{2p-4}\bar{e}_{\pi_{p}} \label{e_{1,p}}\\
\bar{e}_{\pi_2}
&= 
F_{2p-4} \bar{e}_{\pi_{p-1}}-F_{2p-6}\bar{e}_{\pi_{p}}\label{e_{2,p}} . 
\end{align}
Substituting these two expressions for 
$
\bar{e}_{\pi_1},
\bar{e}_{\pi_2}
$
from
\eqref{e_{1,p}}, \eqref{e_{2,p}}
into \eqref{column_1}, we obtain
$$
 (s_1+2)[F_{2p-2} \bar{e}_{\pi_{p-1}}-F_{2p-4}\bar{e}_{\pi_{p}} )]
 = \bar{e}_{\sigma_{(1,1)}}+
\ldots
+\bar{e}_{\sigma_{(1,s_1)}}+ [F_{2p-4} \bar{e}_{\pi_{p-1}}-F_{2p-6} \bar{e}_{\pi_{p}}]
$$
or equivalently,
\begin{equation}
[(s_1+2) F_{2p-2} - F_{2p-4}]\bar{e}_{\pi_{p-1}} +  [F_{2p-6} - (s_1+2)F_{2p-4}]\bar{e}_{\pi_{p}} 
= -\bar{e}_{\sigma_{(1,1)}}
-\ldots
-\bar{e}_{\sigma_{(1,s_1)}} \label{column1_final}.
\end{equation}

Also, substituting equation \eqref{e_{1,p}} into \eqref{sigma1_pi_1} yields

\begin{align}
2\bar{e}_{\sigma_{(1,i)}} &= F_{2p-2} \bar{e}_{\pi_{p-1}}-F_{2p-4}\bar{e}_{\pi_{p}} \label{e_{s_1+2,p}}\text{ for }i = 1, \dots, s_1.
\end{align}

Each of the columns of the matrix $M$ below expresses the coefficients of a relation valid in the sandpile group $K$ among the elements of \eqref{eq:s1-plus-s2-plus-2-elements} as follows:
\begin{itemize}

    \item equation \eqref{column1_final}: column 1 
    \item equation \eqref{column_2}: column 2
    \item  equation \eqref{e_{s_1+2,p}}: column 3 through  column $s_1+2$
    \item equation \eqref{sigma2_pi_p}: column $s_1+3$ through column $s_1+s_2+2$. 
\end{itemize}

\begin{equation}
\label{M-matrix}
M =
\bordermatrix{~ &  &  &  &  & &  \cr
\bar{e}_{\pi_{p-1}}&(s_1+2) F_{2p-2} - F_{2p-4}  &-1    & F_{2p-2}   &\cdots     &F_{2p-2}& 0&\cdots &0 \cr
\bar{e}_{\pi_{p}}  & F_{2p-6} - (s_1+2)F_{2p-4}  &s_2+2 & -F_{2p-4}  & \cdots & -F_{2p-4}   &1& \cdots &1\cr
\bar{e}_{\sigma_{(1,1)}} & -1                    & 0     & -2        & 0 &  0   & 
\cdots&\cdots& 0 \cr
\vdots             & \vdots                      & \vdots&0          & -2 &     \vdots &\vdots&  &\vdots  \cr
\bar{e}_{\sigma_{(1,s_1)}} & -1                  & 0     &           & 0 &    \ddots&0& 0&  \cr
\bar{e}_{\sigma_{(2,1)}} & 0                     & -1    &\vdots     & \vdots &    0 &-2& 0& 
\vdots \cr
\vdots             & \vdots                      & \vdots &          &  &    \vdots &\vdots&-2  &0  \cr
\bar{e}_{\sigma_{(2,s_2)}} & 0                   & -1     &0        & 0 &    0&0& 0& -2\cr
}
\end{equation}

\medskip
\noindent
This essentially completes {\sf Step 1}. However, to ease readability in $M$, we make some substitutions 
\begin{align*}
 d &=M_{1,1}= (s_1+2)F_{2p-2}-F_{2p-4} \\
 e &=M_{2,1}= F_{2p-6}-(s_1+2)F_{2p-4}\\
 f &=M_{2,2}= s_2+2\\
 g &= F_{2p-2}\\
 h &= -F_{2p-4}
\end{align*}
and also use some block matrix and vector abbreviations
\def\onesvector{\mathbf{1}}
\def\zerovector{\mathbf{0}}
\begin{align*}
    I_m&:=\text{ identity matrix of size }m \times m,\\
    J_{m,n}&:=\text{ all ones matrix of size }m \times n,\\
    \bigzero_{m,n}&:=\text{ all zero matrix of size }m \times n,\\
    \onesvector_m&:=\text{ all ones (column) vector of length }m,\\
    \zerovector_m&:=\text{ all zero (column) vector of length }m.
\end{align*}

Then the above matrix $M$ becomes
\begin{equation}
\label{M-abbreviated}
M=\begin{pmatrix}
d & \rvline & {\color{red}-1} & \rvline & g\cdot\onesvector^T_{s_1} & \rvline & \zerovector^T_{s_2}  \\
\hline
e & \rvline & f & \rvline & h\cdot\onesvector^T_{s_1} &\rvline & \onesvector^T_{s_2}  \\
\hline
-\onesvector_{s_1} & \rvline & \zerovector_{s_1} & \rvline & -2 I_{s_1}& \rvline & \bigzero_{s_1,s_2} \\
\hline
\zerovector_{s_2} & \rvline & -\onesvector_{s_2} & \rvline & \bigzero_{s_2,s_1}& \rvline & -2\cdot I_{s_2} 

\end{pmatrix}
\end{equation}
where we have emphasized in red the entry $M_{1,2}=-1$ that will be used in the first part of {\sf Step 2}.

\subsection{Step 2.}
 Since column vectors of $M$ give relations among
 the elements of \eqref{leaves-plus-path-endpoints} that are 
valid within $K=K(\Cone(T(p,s_1,s_2)))$,
by performing column operations, one can create more such valid relations.  We now use such column operations to try and create relations valid among only the elements of 
\eqref{leaf-images},
not involving the first two elements $
\bar{e}_{\pi_{p-1}},
\bar{e}_{\pi_p}$, as follows.

Use entry $M_{1,2}=-1$ as a pivot to clear row $1$,
eliminating entries $M_{1,1},$ and $M_{1,3},M_{1,4},\ldots,M_{1,s_1+2}$.  That is, add $d$ times column 2 to column 1, and add $g$ times column 2 to columns $3,4,\ldots, s_1+2$, giving this matrix $\hat M$:
 
\begin{equation}
\hat M=
\begin{pmatrix}
0 & \rvline & -1 & \rvline & \zerovector^T_{s_1} & \rvline & \zerovector^T_{s_2}  \\
\hline
df+e & \rvline & f & \rvline & (gf+h)\cdot\onesvector^T_{s_1} &\rvline & \onesvector^T_{s_2} \\
\hline
-\onesvector_{s_1} & \rvline & \zerovector_{s_1} & \rvline & -2 I_{s_1}& \rvline & \bigzero_{s_1,s_2} \\
\hline
-d \cdot \onesvector_{s_2} & \rvline & -\onesvector_{s_2} & \rvline & -g J_{s_2,s_1}& \rvline & -2\cdot I_{s_2} 
\end{pmatrix}.
\end{equation}

Prior to the next set of column operations, we rewrite $\hat{M}$ so as to isolate and emphasize in red its entry $\hat{M}_{2,s_1+3}=1$, which will be used next step to clear out almost all of row $2$:

\begin{equation*}    
\hat{M}=
\begin{pmatrix}
0 & \rvline & -1 & \rvline & \zerovector^T_{s_1} & \rvline &0 &\rvline&\zerovector^T_{s_2-1}  \\
\hline
df+e & \rvline & f & \rvline & (gf+h)\cdot\onesvector^T_{s_1} &\rvline & 
{\color{red}1}& \rvline &\onesvector^T_{s_2-1} \\
\hline
-\onesvector_{s_1} & \rvline & \zerovector_{s_1} & \rvline & -2 I_{s_1}& \rvline & \zerovector_{s_1}&\rvline&\bigzero_{s_1,s_2-1} \\
\hline
-d  & \rvline & -1 & \rvline & -g \cdot \onesvector^T_{s_1}& \rvline &-2&\rvline& \zerovector^T_{s_2-1} \\
\hline
-d\cdot \onesvector_{s_2-1} & \rvline & -\onesvector_{s_2-1} & \rvline & -g \cdot J_{s_2-1,s_1}& \rvline &\zerovector_{s_2-1} &\rvline& -2\cdot I_{s_2-1} 

\end{pmatrix}
\end{equation*}

\noindent
Now use that entry $\hat{M}_{2,s_1+3}=1$
to clear out the rest of row $2$ except for the entry $\hat{M}_{2,2}=f$, eliminating entries
$\hat{M}_{2,3},\hat{M}_{2,4},\ldots,\hat{M}_{2,s_1+2}$ and  $\hat{M}_{2,s_1+4},\hat{M}_{2,s_1+5},\ldots,\hat{M}_{2,s_1+s_2+2}$.
That is, subtract 
\begin{itemize}
\item $df+e$ times column $s_1+3$ from column $1$,
\item $gf+h$ times column $s_1+3$ from each of columns $3,4,\ldots,s_1+2$, and
\item $1$ times column $s_1+3$ from each of columns $s_1+4,s_1+5,\ldots,s+1+s_2+2$.
\end{itemize}

\noindent
The result is this matrix $\hat{\hat{M}}$:

\begin{equation*}    
\hat{\hat{M}}=
\begin{pmatrix}
0 & \rvline & -1 & \rvline & \zerovector^T_{s_1} & \rvline &0 &\rvline&\zerovector^T_{s_2-1}  \\
\hline

0 & \rvline & f & \rvline & \zerovector^T_{s_1} &\rvline & 
1& \rvline &\zerovector^T_{s_2-1} \\
\hline

-\onesvector_{s_1} & \rvline & \zerovector_{s_1} & \rvline & -2 I_{s_1}& \rvline & \zerovector_{s_1}&\rvline&\bigzero_{s_1,s_2-1} \\
\hline

2(df+e)-d  & \rvline & -1 & \rvline & (2(gf+h)-g) \cdot \onesvector^T_{s_1}& \rvline &-2&\rvline& 2\cdot \onesvector^T_{s_2-1} \\
\hline
-d\cdot \onesvector_{s_2-1} & \rvline & -\onesvector_{s_2-1} & \rvline & -g \cdot J_{s_2-1,s_1}& \rvline & \zerovector_{s_2-1} &\rvline& -2\cdot I_{s_2-1} 

\end{pmatrix}
\end{equation*}
Note that with the exception of columns $2$ and $s_1+3$ of $\hat{\hat{M}}$, its remaining columns all start with two zeroes, and therefore represent valid relations in $K$ among the elements of $\eqref{leaf-images}$.  Therefore deleting columns $2, s_1+3$ and rows $1,2$
from $\hat{\hat{M}}$ yields this 
 $(s_1 + s_2) \times (s_1 + s_2)$ matrix
$$
\begin{pmatrix*}
-\onesvector_{s_1} & \rvline & -2\cdot I_{s_1} & \rvline & \bigzero_{s_1,s_2-1} \\
\hline
2(df+e)-d  & \rvline & (2(gf+h)-g) \cdot \onesvector^T_{s_1} & \rvline & 2\cdot \onesvector^T_{s_2-1} \\
\hline
-d\cdot \onesvector_{s_2-1} & \rvline & -g\cdot J_{s_2-1,s_1} & \rvline & -2\cdot I_{s_2-1} 
\end{pmatrix*}
$$
whose columns represent relations in $K$ satisfied by
the elments of \eqref{leaf-images}.
Thus reordering its rows gives this matrix $M'$, whose columns represent relations in $K$ satisfied by a reordered list of
the elements of \eqref{leaf-images}:
\begin{equation}
\label{starting_matrix}
M'=
\begin{pmatrix*}
2(df+e)-d  & \rvline & (2(gf+h)-g) \cdot \onesvector^T_{s_1} & \rvline & 2\cdot \onesvector^T_{s_2-1} \\
\hline
-\onesvector_{s_1} & \rvline & -2\cdot I_{s_1} & \rvline & \bigzero_{s_1,s_2-1}  \\
\hline
-d\cdot \onesvector_{s_2-1} & \rvline & -g\cdot J_{s_2-1,s_1} & \rvline & -2\cdot I_{s_2-1}
\end{pmatrix*}
\end{equation}
This completes {\sf Step 2}.

\subsection{Step 3.}
We wish to show 
$\pm \det(M')=|K|=\tau(\Cone(T(p,s_1,s_2))=t(p,s_1,s_2)$,
starting with several row and column operations on $M'$ over $\mathbb{Z}$ that do not affect its determinant. In fact, these row and column operations will also be invertible over $\Z$ whenever $g$ is {\it even}, a fact which we will use later in {\bf Step 5}.

First, add $-\frac{g}{2}$ times each of columns $s_1+2,s_1+3,\ldots, s_1+s_2$ to each of columns $2,3,\ldots,s_1+1$, yielding:
    
\begin{equation*}\label{M1}
\begin{pmatrix*}

2(df+e)-d  & \rvline & (2(gf+h)-s_2g) \cdot \onesvector^T_{s_1} & \rvline & 2\cdot \onesvector^T_{s_2-1} \\
\hline
-\onesvector_{s_1} & \rvline & -2\cdot I_{s_1} & \rvline & \bigzero_{s_1,s_2-1}  \\
\hline
-d\cdot \onesvector_{s_2-1} & \rvline & \bigzero_{s_2-1,s_1} & \rvline & -2\cdot I_{s_2-1}

\end{pmatrix*}
\end{equation*}

Next, add each of rows $s_1+2,s_1+3,\ldots,s_1+s_2$ to the first row, yielding:
        
\begin{equation*}\label{M2}
\begin{pmatrix*}

2(df+e)-s_2d  & \rvline & (2(gf+h)-s_2g) \cdot \onesvector^T_{s_1} & \rvline &  \zerovector^T_{s_2-1} \\
\hline
-\onesvector_{s_1} & \rvline & -2\cdot I_{s_1} & \rvline & \bigzero_{s_1,s_2-1}  \\
\hline
-d\cdot \onesvector_{s_2-1} & \rvline & \bigzero_{s_2-1,s_1} & \rvline & -2\cdot I_{s_2-1}

\end{pmatrix*}
\end{equation*}

Then add $(gf+h-s_2\frac{g}{2})$ times each of rows $2,3,\ldots,s_1+1$ to row $1$.  The result is the following block lower-triangular matrix
        
\begin{equation}
\label{block-upper-triangular-matrix}
M''=\begin{pmatrix*}
x(p) & \rvline & \zerovector^T_{s_1} & \rvline &  \zerovector^T_{s_2-1} \\
\hline
-\onesvector_{s_1} & \rvline & -2\cdot I_{s_1} & \rvline & \bigzero_{s_1,s_2-1}  \\
\hline
-d\cdot \onesvector_{s_2-1} & \rvline & \bigzero_{s_2-1,s_1} & \rvline & -2\cdot I_{s_2-1} 
\end{pmatrix*}
\end{equation}
whose upper left entry $M''_{1,1}=x(p)$ has these formulas:
\begin{align} 
\label{formula-for-x}
 \notag   x(p)&:= 2(df+e) - s_2d-s_1(gf+h-\frac{1}{2}s_2g),\\
    &= (2s_1+2s_2+\displaystyle\frac{1}{2}s_1s_2+8)F_{2p-2}-(s_1+s_2+8)F_{2p-4}+2F_{2p-6}.
\end{align}
Note that this gives us an expression for $\det(M')$, as 
$$
\det(M')=\det(M'')=(-2)^{s_1+s_2-1} x(p).
$$
Recall that our goal in this {\sf Step 3} was to show that
$$
\det(M')=\pm t(p,s_1,s_2)=\pm 2^{s_1+s_2-2} a(p)
$$
where $a:=a(p)$ was defined was defined in \eqref{formula-for-a} as
\begin{equation}
\label{formula-for-a-repeated}
a:=a(p):=2^{1-s_1}(2\cdot b_{2p-3} +s_2b_{2p-4}).
\end{equation}
Hence one sees that {\sf Step 3} would be completed by proving the following lemma.

\begin{lemma}
\label{a-is-2x}
Let $p \geq 2.$ The two quantities $x(p)$ from \eqref{formula-for-x} and $a(p)$ from \eqref{formula-for-a-repeated} are related by
$x(p) = \displaystyle\frac{a(p)}{2}.$
\end{lemma}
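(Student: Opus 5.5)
The plan is a direct computation: write both sides as integer combinations of the two Fibonacci numbers $F_{2p-2}$ and $F_{2p-4}$, and compare coefficients. The only input needed beyond the definitions is a closed form for $b_n=b_n^{(s_1)}$ in terms of Fibonacci numbers.

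\textbf{Step (i): closed form for $b_n$.} Since $b_n$ satisfies the Fibonacci recurrence \eqref{b_n}, it is determined by its two initial values. Extending $F$ to $F_0=0$, $F_{-1}=1$, one has
$$b_n=b_{-1}F_{n+2}+b_{-2}F_{n+1}.$$
This holds at $n=-2$ and $n=-1$ by inspection, and then propagates by the recurrence. Substituting \eqref{b_-2} and \eqref{b_-1} gives
$$b_n=2^{s_1-1}\bigl((s_1+2)F_{n+2}+2F_{n+1}\bigr).$$
(Alternatively, this can be read off from the generating function \eqref{closed_form}.)

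\textbf{Step (ii): expand $a(p)/2$.} Plug the closed form into \eqref{formula-for-a-repeated}. The powers $2^{s_1-1}$ cancel against the prefactor $2^{1-s_1}$, leaving
$$\frac{a(p)}{2}=(s_1+2)F_{2p-1}+2F_{2p-2}+\frac{s_2}{2}\bigl((s_1+2)F_{2p-2}+2F_{2p-3}\bigr).$$
Now reduce to the basis $F_{2p-2},F_{2p-4}$ using $F_{2p-3}=F_{2p-2}-F_{2p-4}$ and $F_{2p-1}=2F_{2p-2}-F_{2p-4}$. This gives
$$\frac{a(p)}{2}=\Bigl(2s_1+2s_2+\tfrac12 s_1s_2+6\Bigr)F_{2p-2}-(s_1+s_2+2)F_{2p-4}.$$

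\textbf{Step (iii): reduce $x(p)$ to the same basis.} In the second line of \eqref{formula-for-x}, eliminate $F_{2p-6}$ via $F_{2p-6}=3F_{2p-4}-F_{2p-2}$, which is the identity $F_{2i-2}-3F_{2i-4}+F_{2i-6}=0$ already proved in Lemma~\ref{CT-trunk-lemma}. This yields exactly the same two coefficients, $2s_1+2s_2+\tfrac12 s_1s_2+6$ and $-(s_1+s_2+2)$, so $x(p)=a(p)/2$.

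The remaining obstacle is that \eqref{formula-for-x} was only asserted, so I would also verify its second line from the first. Substitute $d=(s_1+2)F_{2p-2}-F_{2p-4}$, $e=F_{2p-6}-(s_1+2)F_{2p-4}$, $f=s_2+2$, $g=F_{2p-2}$ and $h=-F_{2p-4}$ into
$$2(df+e)-s_2d-s_1\bigl(gf+h-\tfrac12 s_2 g\bigr),$$
and collect terms. The $F_{2p-2}$ coefficient comes out as $2(s_1+2)(s_2+2)-s_2(s_1+2)-s_1(s_2+2)+\tfrac12 s_1s_2=2s_1+2s_2+\tfrac12 s_1s_2+8$. The $F_{2p-4}$ and $F_{2p-6}$ coefficients are checked the same way. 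This bookkeeping is the only error-prone step. The hypothesis $p\ge 2$ ensures all Fibonacci indices used are $\ge -1$, where the extended convention is valid.
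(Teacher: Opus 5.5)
Your argument is correct, but it takes a different route from the paper's. The paper never writes $b_n$ in terms of Fibonacci numbers. Instead it splits $x(p)$ and $a(p)/2$ each into two shifted pieces: one built from $F_{2p-4},F_{2p-6},F_{2p-8}$ and $b_{2p-5},b_{2p-6}$, the other from $F_{2p-3},F_{2p-5},F_{2p-7}$ and $b_{2p-4},b_{2p-5}$. It checks the two piecewise equalities by hand at $p=2,3,4$, then propagates them by a simultaneous induction on $p$ using only the two recurrences.

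You instead solve the recurrence once, $b_n=b_{-1}F_{n+2}+b_{-2}F_{n+1}$. This puts both sides in the span of $F_{2p-2},F_{2p-4}$, so the lemma reduces to a single coefficient comparison. Your coefficients $2s_1+2s_2+\tfrac12 s_1s_2+6$ and $-(s_1+s_2+2)$ are right on both sides, and so is your expansion of $2(df+e)-s_2d-s_1(gf+h-\tfrac12 s_2g)$.

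What your route buys:
\begin{itemize}
\item It is shorter and needs no base cases.
\item It gives an explicit formula for $a(p)$.
\item It justifies the second line of the formula for $x(p)$, which the paper only asserts.
\item The same closed form gives $b_n^{(s_1)}-2b_n^{(s_1-1)}=2^{s_1-1}F_{n+2}$, which makes the two auxiliary identities in the proof of the spanning tree theorem immediate.
\end{itemize}
The paper's induction avoids solving the recurrence, but at the cost of hand-checked base cases and delicate index bookkeeping.

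One small correction to your last sentence: at $p=2$ the term $F_{2p-6}=F_{-2}$ occurs in $x(p)$, so indices go down to $-2$, not $-1$. This is harmless. The paper already relies on the standard extension $F_{-2}=-1$, for instance in the entry $e=F_{2p-6}-(s_1+2)F_{2p-4}$ at $p=2$. The identity $F_{n+4}-3F_{n+2}+F_n=0$ that you use holds for all integers $n$ under that extension.
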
 

\begin{proof}
Moving forward we let 
\begin{align}
    A & = 2s_1+2s_2+\displaystyle\frac{1}{2}s_1s_2+8.\notag\\
    B &= -(s_1+s_2+8)\notag.\\
    C &= 2^{1-s_1}.\notag\\
D &= 2^{-s_1}s_2.\notag
\end{align}
Observe that if we let
\begin{align}
    x'(p)&= AF_{2p-4}+BF_{2p-6}+2F_{2p-8}\notag\\
    l(p)&=Cb_{2p-5}+Db_{2p-6}\notag\\
y'(p)&= AF_{2p-3}+BF_{2p-5}+2F_{2p-7}\notag\\
m(p)&=Cb_{2p-4}+Db_{2p-5}.\notag
\end{align}
The formulas \eqref{formula-for-x}, \eqref{formula-for-a-repeated} show that
the two sides of the lemma can both be rewritten as sums of two terms 
\begin{align}   
x(p)&=x'(p)+y'(p)\\
\frac{a(p)}{2}&=l(p)+m(p)
\end{align} 
hence, the lemma asserts the following:
\begin{equation}
   x'(p)+y'(p) =l(p)+m(p) \label{x'_y'} \hspace{1cm}
%\text{iff}\\  x(p) &= \displaystyle\frac{a(p)}{2}\notag.
\end{equation}
Thus it suffices to prove simultaneously by induction on $p$ that

\begin{align}    
x'(p) &= l(p) \label{x and l} \hspace{1cm} \text{and}\\
y'(p) &= m(p) \label{y and m}  
\end{align}
Base case: We check when $p = 2, p=3, p=4$ we see that 
\begin{align}  
x'(2) &= l(2) = s_1 +s_2 + 2.\notag\\
y'(2) &= m(2)   = s_1+ s_2+\displaystyle\frac{1}{2}s_1s_2+4\notag\\
x'(3) &= l(3) = 2s_1 +2s_2 +\displaystyle\frac{1}{2}s_1s_2+6\notag\\
y'(3) &= m(3)   = 3s_1+ 3s_2+s_1s_2+10\notag\\
x'(4) &= l(4) = 16+5s_1 +5s_2 + \displaystyle\frac{3}{2}s_1s_2.\notag\\
y'(4) &= m(4)   = 26+8s_1+8s_2+\displaystyle\frac{5}{2}s_1s_2\notag\\
\end{align}
Now assume that the statements  \eqref{x and l} and \eqref{y and m}
are true for $p = k, k-1, k-2.$ That is, 

\begin{align}
 AF_{2k-4}+BF_{2k-6}+2F_{2k-8}
   &=Cb_{2k-5}+Db_{2k-6} \label{ABCD_1}
   \end{align} and 
 \begin{align} 
    AF_{2k-3}+BF_{2k-5}+2F_{2k-7}&=Cb_{2k-4}+Db_{2k-5}.\label{ABCD_2}
\end{align} 

Then the sum of equations \eqref{ABCD_1}  and \eqref{ABCD_2}  is true and equal to  \eqref{x'_y'} evaluated at $k+1:$  
\begin{align}
 AF_{2k-2}+BF_{2k-4}+2F_{2k-6}
   &=Cb_{2k-3}+Db_{2k-4}.\notag
   \end{align}
Next our inductive step yields the following:

\begin{align}
 Cb_{2k-2}+Db_{2k-2}&=Cb_{2k-3}+Cb_{2k-4}+Db_{2k-4}+Db_{2k-5}\notag\\
 &=Cb_{2k-4}+Cb_{2k-5}+Cb_{2k-4}+Db_{2k-5}+Db_{2k-6}+Db_{2k-5}\notag\\
 &=AF_{2k-4}+BF_{2k-6}+2F_{2k-8}\notag\\
 & \qquad +AF_{2k-3}+BF_{2k-5}+2F_{2k-7}+AF_{2k-3}+BF_{2k-5}+2F_{2k-7}\notag\\
 &=A(F_{2k-4} + F_{2k-3}+F_{2k-3})\notag\\
 &\notag 
 \qquad +B(F_{2k-5}+ F_{2k-6} +F_{2k-5})+2(F_{2k-7}+F_{2k-8}+F_{2k-7})\\
 &=AF_{2k-1}+BF_{2k-3}+2F_{2k-5} \notag
   \end{align}
   which is equal to \eqref{ABCD_1} evaluated at $k+1$.
\end{proof}

This completes {\sf Step 3}.

\subsection{Step 4.}
This step was explained fully in the proof outline.

\subsection{Step 5.}

We now determine the structure of the sandpile group 
$$
K=K(\Cone(T(p,s_1,s_2)) \cong \Z^{s_1+s_2}/\im(M')
$$
where we recall that $M'$ was defined  as follows in
\eqref{starting_matrix}:
$$
M'=
\begin{pmatrix*}
2(df+e)-d  & \rvline & (2(gf+h)-s_2g) \cdot \onesvector^T_{s_1} & \rvline & 2\cdot \onesvector^T_{s_2-1} \\
\hline
-\onesvector_{s_1} & \rvline & -2\cdot I_{s_1} & \rvline & \bigzero_{s_1,s_2-1}  \\
\hline
-d\cdot \onesvector_{s_2-1} & \rvline & -g\cdot J_{s_2-1,s_1} & \rvline & -2\cdot I_{s_2-1}
\end{pmatrix*}.
$$
The analysis follows the cases in Theorem~\ref{sandpile-group-theorem},
based on $p \bmod{3}$ and the parity of $s_1, s_2$.

\vskip.1in
\noindent
{\sf Case 1:}  $p\equiv 1 \bmod{3}.$

Recall Fibonacci numbers $F_m$ are even when $m \equiv 0 \bmod{3}$, and are odd when $m \equiv 1,2 \bmod{3}$.  With this one can check that in Case 1 where $p \equiv 1 \bmod{3}$, one has
\begin{align*}
g&=F_{2p-2}\text{ is {\it even}, and }\\   
d&= (s_1+2)F_{2p-2}-F_{2p-4}\text{ is {\it odd}}.
\end{align*}
Therefore, as noted in {\sf Step 3}, since $g$ is even, the row and column performed there on $M'$ were also invertible over $\Z$ (they added a $\frac{g}{2}$ multiple of a row to another), resulting in this block triangular matrix 
$M''$:

$$
M''=\begin{pmatrix*}
x(p) & \rvline & \zerovector^T_{s_1} & \rvline &  \zerovector^T_{s_2-1}\\
\hline
-\onesvector_{s_1} & \rvline & -2\cdot I_{s_1} & \rvline & \bigzero_{s_1,s_2-1}  \\
\hline
-d\cdot \onesvector_{s_2-1} & \rvline & \bigzero_{s_2-1,s_1} & \rvline & -2\cdot I_{s_2-1} 
\end{pmatrix*}
$$

We next bring $M''$ to its Smith normal form via row and column operations invertible over $\Z$.  As $d$ is odd, one can subtract $\frac{d-1}{2}$ times each of columns $s_1+2,s_1+3,\ldots,s_1+s_2$ from column $1$ column, giving
\begin{align*}
\begin{pmatrix*}
x(p) & \rvline & \zerovector^T_{s_1} & \rvline &  \zerovector^T_{s_2-1}& \\
\hline
-\onesvector_{s_1} & \rvline & -2\cdot I_{s_1} & \rvline & \bigzero_{s_1,s_2-1} & \\
\hline
- \onesvector_{s_2-1} & \rvline & \bigzero_{s_2-1,s_1} & \rvline & -2\cdot I_{s_2-1}& 
\end{pmatrix*}
&=
\begin{pmatrix*}
x(p)  & \rvline &  \zerovector^T_{s_1+s_2-1}& \\
\hline
-\onesvector_{s_1 +s_2-1} &\rvline& -2\cdot I_ {s_1+s_2-1}& \\
\end{pmatrix*}\\
&=
\begin{pmatrix*}
x(p) & \rvline & 0 & \rvline &  \zerovector^T_{s_1+s_2-2}& \\
\hline
{\color{red}-1} & \rvline & -2 & \rvline & \zerovector^T_{s_1+s_2-2} & \\
\hline
-\onesvector_{s_1+s_2-2} & \rvline & \zerovector_{s_1+s_2-2} & \rvline & -2\cdot I_{s_1+s_2-2}& 
\end{pmatrix*}
\end{align*}
where the last rewriting emphasizes the red entry of $-1$ to be used in the next step.
Use this entry to clear out all other entries in the first column, that is, add $x(p)$
times row $2$ to row $1$, and subtract row $2$ from each of rows $3,4,\ldots,s_1+s_2$.  The result is this:
\begin{equation*}
\begin{pmatrix*}

0 & \rvline & -2x(p)  & \rvline &  \zerovector^T_{s_1+s_2-2}& \\
\hline
-1 & \rvline & -2& \rvline & \zerovector^T_{s_1+s_2-2} & \\
\hline
\zerovector_{s_1+s_2-2} & \rvline & 2\cdot \onesvector_{s_1+s_2-2} & \rvline & -2\cdot I_{s_1+s_2-2}& 
\end{pmatrix*}
\end{equation*}
Now subtract twice column $1$ from column $2$,
and add each of columns $3,4,\ldots,s_1+s_2$ to column $2$, giving:
$$
\begin{pmatrix*}
0 & \rvline & -2x(p)  & \rvline &  \zerovector^T_{s_1+s_2-2}& \\
\hline
-1 & \rvline & 0& \rvline & \zerovector^T_{s_1+s_2-2} & \\
\hline
\zerovector_{s_1+s_2-2} & \rvline &  \zerovector_{s_1+s_2-2} & \rvline & -2\cdot I_{s_1+s_2-2}& 
\end{pmatrix*}.
$$
After negating the whole matrix, and 
rearranging rows and columns, one reaches the Smith normal form:
$$
\begin{pmatrix*}
1 & \rvline&\zerovector^T_{s_1+s_2-2}  & \rvline& 0 \\ \hline
\zerovector_{s_1+s_2-2} & \rvline& 2\cdot I_{s_1+s_2-2} & \rvline& \zerovector_{s_1+s_2-2} \\ \hline
0 & \rvline& \zerovector^T_{s_1+s_2-2}& \rvline& 2x(p)
\end{pmatrix*}.
$$ 
This finally allows one to compute the structure of $K$:
\begin{align*}
K\cong \Z^{s_1+s_2}/\im(M') 
&\cong \Z^{s_1+s_2}/\im(M'') \\
&\cong \Z_2^{s_1+s_2-2} \oplus \Z_{2x(p)} \\
&= \Z_2^{s_1+s_2-2} \oplus \Z_{a},
\end{align*}
where the last equality used Lemma~\ref{a-is-2x}.
This agrees with Theorem~\ref{sandpile-group-theorem} in Case 1.

\vskip.1in
\noindent
{\sf Case 2:}   $p \not\equiv 1 \bmod{3},$ in particular $p \equiv 0 \bmod{3},$ and $s_1$ odd or $p \equiv 2 \bmod{3},$ and $s_1$ even.
\medskip

Again recall that Fibonacci numbers $F_m$ are even when $m \equiv 0 \bmod{3}$, and odd when $m \equiv 1,2 \bmod{3}$. Therefore in this case, when $p \not\equiv 1 \bmod 3$, one has that 
\begin{align*}
g&=F_{2p-2}\text{ is {\it odd}, however}\\
d&\text{ can be {\it even} or {\it odd.}}
\end{align*}

\noindent
We again begin with the matrix $M' \in \Z^{(s_1+s_2) \times (s_1+s_2)}$ from \eqref{starting_matrix} having $K \cong \Z^{s_1+s_2}/\im(M')$, copied below:
\begin{equation*}
M'=\begin{pmatrix}
2(df+e)-d & \rvline & (2(gf+h)-g)\onesvector^T_{s_1} & \rvline & 2\cdot\onesvector^T_{s_2-1} \\
\hline
-\onesvector_{s_1} & \rvline & -2I_{s_1} & \rvline & \bigzero_{s_1,s_2-1} \\
\hline
-d \cdot \onesvector_{s_2-1} & \rvline & -g J_{s_2-1,s_1} & \rvline & -2 I_{s_2-1} 
\end{pmatrix}
\end{equation*}
We will bring $M'$ to Smith normal form via
row and column operations invertible over $\Z$.
Note that we know
$$
\det(M')=\pm t(p,s_1,s_2)=\pm 2^{s_1+s_2-2} a(p),
$$
and what remains in order to prove Theorem~\ref{sandpile-group-theorem} in this case is to show that its Smith normal form is either
$$
\begin{cases}
  \diag(1,1,2^{s_1+s_2-3},2a(p)) & \text{ if }s_1\text{ or }s_2\text{ is odd},\\  
  \diag(1,1,2^{s_1+s_2-4},4,a(p)) & \text{ if }s_1,s_2\text{ are both even}.\
\end{cases}
$$
Here are the row and column operations.
\begin{enumerate}
\item First, add each of the last $(s_2-1)$ rows to the top row, yielding
$$
\begin{pmatrix}
2(df+e)-s_2d & \rvline & (2(gf+h)-s_2g )\onesvector^T_{s_1} & \rvline & \zerovector^T_{s_2-1} \\
\hline
-\onesvector_{s_1} & \rvline & -2I_{s_1} & \rvline & \bigzero_{s_1,s_2-1} \\
\hline
-d \cdot \onesvector_{s_2-1} & \rvline & -g J_{s_2-1,s_1} & \rvline & -2 I_{s_2-1} 
\end{pmatrix}
=
\begin{pmatrix}
z & \rvline & y \cdot \onesvector^T_{s_1} & \rvline & \zerovector^T_{s_2-1} \\
\hline
-\onesvector_{s_1} & \rvline & -2I_{s_1} & \rvline & \bigzero_{s_1,s_2-1} \\
\hline
-d \cdot \onesvector_{s_2-1} & \rvline & -g J_{s_2-1,s_1} & \rvline & -2 I_{s_2-1} 
\end{pmatrix},
$$
where we have abbreviated

\begin{align}
\label{definition-for-y}
y&:=2(gf+h)-s_2g
= (s_2 + 4)F_{2p-2} - 2F_{2p-4}\\
\label{definition-for-z}
z&:=2(df+e)-s_2d
=(4s_1 + 2s_2 + s_1s_2+ 8)F_{2p-2}+(-2s_1 - s_2 - 8)F_{2p-4}+ 2F_{2p-6}.
\end{align}

\item Using the fact that $g$ is odd, subtract $\frac{(g-1)}{2}$ times each of columns $s_1+2,s_1+3,\ldots,s_1+s_2$ from 
each of columns $2,3,\ldots, s_1+1$, yielding this matrix $\bar{M}$
\begin{equation}
\label{eq:promising-three-block-matrix}
\bar{M}:=\begin{pmatrix}
z & \rvline & y \cdot\onesvector^T_{s_1} & \rvline & \zerovector^T_{s_2-1} \\
\hline
-\onesvector_{s_1} & \rvline & -2I_{s_1} & \rvline & \bigzero_{s_1,s_2-1} \\
\hline
-d \cdot \onesvector_{s_2-1} & \rvline & - J_{s_2-1,s_1} & \rvline & -2 I_{s_2-1} 
\end{pmatrix}.
\end{equation}

\item One can rewrite $\bar{M}$ to emphasize its
$(2,1)$-entry of $-1$ in red that will be used as a pivot:
$$
\bar{M}:=\begin{pmatrix}
z & \rvline & y \cdot\onesvector^T_{s_1} & \rvline & \zerovector^T_{s_2-1} \\
\hline
-\onesvector_{s_1} & \rvline & -2I_{s_1} & \rvline & \bigzero_{s_1,s_2-1} \\
\hline
-d \cdot \onesvector_{s_2-1} & \rvline & - J_{s_2-1,s_1} & \rvline & -2 I_{s_2-1} 
\end{pmatrix}
=
\begin{pmatrix}
z & \rvline &y&\rvline& y \cdot\onesvector^T_{s_1-1} & \rvline & \zerovector^T_{s_2-1} \\
\hline
{\color{red}-1} & \rvline &-2 &\rvline & \zerovector_{s_1-1}^T& \rvline&\zerovector_{s_2-1}^T  \\\hline
-\onesvector_{s_1-1} & \rvline & \zerovector_{s_1-1}&\rvline& -2I_{s_1-1} & \rvline & \bigzero_{s_1-1,s_2-1} \\
\hline
-d \cdot \onesvector_{s_2-1} & \rvline&-\onesvector_{s_2-1} &\rvline & - J_{s_2-1,s_1-1} & \rvline & -2 I_{s_2-1} 
\end{pmatrix}
$$
\item Use the red $(2,1)$-entry of $-1$ as a pivot to clear out all other entries in the first column, giving this:
$$
\begin{pmatrix}
0 & \rvline &y-2z&\rvline& y \cdot\onesvector^T_{s_1-1} & \rvline & \zerovector^T_{s_2-1} \\
\hline
{\color{red}-1} & \rvline &-2 &\rvline & \zerovector_{s_1-1}^T& \rvline&\zerovector_{s_2-1}^T  \\\hline
\zerovector_{s_1-1} & \rvline & 2\cdot\onesvector_{s_1-1}&\rvline& -2I_{s_1-1} & \rvline & \bigzero_{s_1-1,s_2-1} \\
\hline
\zerovector_{s_2-1} & \rvline&(2d-1)\cdot \onesvector_{s_2-1} &\rvline & - J_{s_2-1,s_1-1} & \rvline & -2 I_{s_2-1} 
\end{pmatrix}
$$
\item Use the same red $(2,1)$-entry of $-1$ as a pivot to clear out the second row.  After that, one can delete the entire second row and first column, giving this smaller matrix with isomorphic cokernel, and with one fewer copy of $1$ removed from its Smith normal form:
$$
\begin{pmatrix}
y-2z&\rvline& y \cdot\onesvector^T_{s_1-1} & \rvline & \zerovector^T_{s_2-1} \\\hline
2\cdot\onesvector_{s_1-1}&\rvline& -2I_{s_1-1} & \rvline & \bigzero_{s_1-1,s_2-1} \\
\hline
(2d-1)\cdot \onesvector_{s_2-1} &\rvline & - J_{s_2-1,s_1-1} & \rvline & -2 I_{s_2-1} 
\end{pmatrix}
$$
\item Add each of the last $(s_2-1)$ columns to column $1$, yielding this matrix with {\it no occurrences of $d$'s}:
$$
\begin{pmatrix}
y-2z&\rvline& y \cdot\onesvector^T_{s_1-1} & \rvline & \zerovector^T_{s_2-1} \\\hline
2\cdot\onesvector_{s_1-1}&\rvline& -2I_{s_1-1} & \rvline & \bigzero_{s_1-1,s_2-1} \\
\hline
-\onesvector_{s_2-1} &\rvline & - J_{s_2-1,s_1-1} & \rvline & -2 I_{s_2-1} 
\end{pmatrix}
$$

\medskip

\item Subtract column $s_1$ from each of columns $1,2,\ldots,(s_1-1)$, giving this:\\
%\begin{equation}
$\begin{pmatrix}
-2z & \rvline & \zerovector^T_{s_1-2} & \rvline &  y &\rvline &\zerovector^T_{s_2-1}  &\\
\hline
2\cdot\onesvector_{s_1-2} & \rvline & -2 \cdot I_{s_1-2} & \rvline & \zerovector_{s_1-2}&\rvline & \zerovector_{s_1-2,s_2-1} &\\
\hline
4  & \rvline & 2 \cdot \onesvector^T_{s_1-2} & \rvline & -2& \rvline & \zerovector^T_{s_2-1}   &\\
\hline
 \zerovector_{s_2-1} & \rvline & \zerovector_{s_2-1,s_1-2}& \rvline &-\onesvector_{s_2-1} &\rvline & -2 \cdot I_{s_2-1}&\\

\end{pmatrix}=$
$\begin{pmatrix}
-2z & \rvline & \zerovector^T_{s_1-2}  & \rvline &  y&\rvline &\zerovector^T_{s_2-2}  &\rvline&0\\
\hline
2\cdot\onesvector_{s_1-2} & \rvline & -2 \cdot I_{s_1-2} & \rvline & \zerovector_{s_1-2}&\rvline & \zerovector_{s_1-2,s_2-2}&\rvline &\zerovector_{s_1-2}\\
\hline
4  & \rvline & 2 \cdot \onesvector^T_{s_1-2} & \rvline & -2& \rvline & \zerovector^T_{s_2-2}   & \rvline &0\\
\hline
 \zerovector_{s_2-2} & \rvline & \zerovector_{s_2-2,s_1-2}& \rvline &-\onesvector_{s_2-2} &\rvline & -2 \cdot I_{s_2-2}& \rvline &\zerovector_{s_2-2}\\
\hline
0 & \rvline & \zerovector^T_{s_1-2}  & \rvline &{\color{red}-1}&\rvline &  \zerovector^T_{s_2-2}&\rvline & -2 &\\
\end{pmatrix}.$
%\end{equation}
\item Use the red entry of $-1$ in the last row and column $s_1$  as a pivot to clear out all other entries in column $s_1$, giving this:

\begin{equation*} 
\begin{pmatrix}
-2z & \rvline & \zerovector^T_{s_1-2}  & \rvline &  0&\rvline &\zerovector^T_{s_2-2}  &\rvline&-2y\\
\hline
2\cdot\onesvector_{s_1-2} & \rvline & -2 \cdot I_{s_1-2} & \rvline & \zerovector_{s_1-2}&\rvline & \zerovector_{s_1-2,s_2-2}&\rvline &\zerovector_{s_1-2}\\
\hline
4  & \rvline & 2 \cdot \onesvector^T_{s_1-2} & \rvline & 0& \rvline & \zerovector^T_{s_2-2}   & \rvline &4\\
\hline
 \zerovector_{s_2-2} & \rvline & \zerovector_{s_2-2,s_1-2}& \rvline &\zerovector_{s_2-2} &\rvline & -2 \cdot I_{s_2-2}& \rvline &2\cdot\onesvector_{s_2-2}\\
\hline
0 & \rvline & \zerovector^T_{s_1-2}  & \rvline &{\color{red}-1}&\rvline &  \zerovector^T_{s_2-2}&\rvline & -2 &\\
\end{pmatrix}.
\end{equation*}

%%%%%%
\item Add columns $(s_1+1)$ through column $(s_1+s_2-2)$ to the last column obtaining:
\begin{equation*} 
\begin{pmatrix}
-2z & \rvline & \zerovector^T_{s_1-2}  & \rvline &  0&\rvline &\zerovector^T_{s_2-2}  &\rvline&-2y\\
\hline
2\cdot\onesvector_{s_1-2} & \rvline & -2 \cdot I_{s_1-2} & \rvline & \zerovector_{s_1-2}&\rvline & \zerovector_{s_1-2,s_2-2}&\rvline &\zerovector_{s_1-2}\\
\hline
4  & \rvline & 2 \cdot \onesvector^T_{s_1-2} & \rvline & 0& \rvline & \zerovector^T_{s_2-2}   & \rvline &4\\
\hline
 \zerovector_{s_2-2} & \rvline & \zerovector_{s_2-2,s_1-2}& \rvline &\zerovector_{s_2-2} &\rvline & -2 \cdot I_{s_2-2}& \rvline &\zerovector_{s_2-2}\\
\hline
0 & \rvline & \zerovector^T_{s_1-2}  & \rvline &{\color{red}-1}&\rvline &  \zerovector^T_{s_2-2}&\rvline & -2 &\\
\end{pmatrix}.
\end{equation*}
\item Deleting column $s_1$ and last row
gives an $(s_1+s_2-2) \times (s_1+s_2-2) $ matrix $\bar{\bar{M}}$ with isomorphic cokernel, and having two fewer copies of $1$ in its Smith normal form compared to $M'$:
\begin{equation*}
\bar{\bar{M}}:=\begin{pmatrix}
-2z & \rvline & \zerovector^T_{s_1-2} & \rvline &  \zerovector^T_{s_2-2} &\rvline &-2y  &\\
\hline
2\cdot\onesvector_{s_1-2} & \rvline & -2 \cdot I_{s_1-2} & \rvline &\zerovector_{s_1-2,s_2-2} &\rvline & \zerovector_{s_1-2} &\\
\hline
4  & \rvline & 2 \cdot \onesvector^T_{s_1-2} & \rvline & \zerovector^T_{s_2-2}& \rvline & 4   &\\
\hline
 \zerovector_{s_2-2} & \rvline & \zerovector_{s_2-2,s_1-2}& \rvline &-2 \cdot I_{s_2-2}&\rvline & \zerovector_{s_2-2} &\\

\end{pmatrix}
\end{equation*}

\noindent Note that all entries of $\bar{\bar{M}}$ are even, and our goal is to show its Smith normal form is either
$$
\begin{cases}
  \diag(2^{s_1+s_2-3},2a(p)) & \text{ if }s_1\text{ or }s_2\text{ is odd},\\  
  \diag(2^{s_1+s_2-4},4,a(p)) & \text{ if }s_1,s_2\text{ are both even}.\
\end{cases}
$$
Thus we can instead start with this matrix $\frac{1}{2} \bar{\bar{M}}$ 
\begin{equation*}
\frac{1}{2} \bar{\bar{M}} =\begin{pmatrix}
-z & \rvline & \zerovector^T_{s_1-2} & \rvline &  \zerovector^T_{s_2-2} &\rvline &-y  &\\
\hline
\onesvector_{s_1-2} & \rvline & -I_{s_1-2} & \rvline &\zerovector_{s_1-2,s_2-2} &\rvline & \zerovector_{s_1-2} &\\
\hline
2  & \rvline &  \onesvector^T_{s_1-2} & \rvline & \zerovector^T_{s_2-2}& \rvline & 2   &\\
\hline
 \zerovector_{s_2-2} & \rvline & \zerovector_{s_2-2,s_1-2}& \rvline &-  I_{s_2-2}&\rvline & \zerovector_{s_2-2} &
\end{pmatrix}
\end{equation*}
which will have 
$$
\det
\left(\frac{1}{2} \bar{\bar{M}}\right)
=\frac{1}{2^{s_1+s_2-2}}\det(\bar{\bar{M}})
=\frac{1}{2^{s_1+s_2-2}}\det(M')
=a(p).
$$
Our revised goal is to show that $\frac{1}{2} \bar{\bar{M}}$ has Smith form is either
$$
\begin{cases}
  \diag(1^{s_1+s_2-3},a(p)) & \text{ if }s_1\text{ or }s_2\text{ is odd},\\  
  \diag(1^{s_1+s_2-4},2,\frac{a(p)}{2}) & \text{ if }s_1,s_2\text{ are both even}.\
\end{cases}
$$
Note that inside $\frac{1}{2} \bar{\bar{M}}$, the copy of $-I_{s_2-2}$ contained in last $s_2-2$ rows  and the columns $s_1,s_1+1,\ldots,s_1+s_2-3$
has no other nonzero entries anywhere in the same rows or columns.  Consequently, one can delete those rows and columns to obtain the following matrix $\bar{\bar{\bar{M}}}$ with isomorphic cokernel, and $s_2-2$ fewer copies of one in its Smith normal form:
\begin{equation*}
\bar{\bar{\bar{M}}} =\begin{pmatrix}
-z & \rvline & \zerovector^T_{s_1-2} & \rvline &-y  &\\
\hline
\onesvector_{s_1-2} & \rvline & -I_{s_1-2} &\rvline & \zerovector_{s_1-2} &\\
\hline
2  & \rvline &  \onesvector^T_{s_1-2} & \rvline & 2   &
\end{pmatrix}
\end{equation*}
It remains to show that $\bar{\bar{\bar{M}}}$ has Smith normal form either 
$$
\begin{cases}
  \diag(1^{s_1-1},a(p)) & \text{ if }s_1\text{ or }s_2\text{ is odd},\\  
  \diag(1^{s_1-2},2,\frac{a(p)}{2}) & \text{ if }s_1,s_2\text{ are both even}.\
\end{cases}
$$
Starting with $\bar{\bar{\bar{M}}}$, adding each of rows $2,3,\ldots,s_1-1$ to the last row gives this matrix:
\begin{equation*}
\begin{pmatrix}
-z & \rvline & \zerovector^T_{s_1-2} & \rvline &-y  &\\
\hline
\onesvector_{s_1-2} & \rvline & -I_{s_1-2} &\rvline & \zerovector_{s_1-2} &\\
\hline
s_1  & \rvline &  \zerovector^T_{s_1-2} & \rvline & 2   &
\end{pmatrix}
\end{equation*}
Now adding each of columns $2,3,\ldots,s_1-1$ to the first column gives this matrix:
\begin{equation*}
\begin{pmatrix}
-z & \rvline & \zerovector^T_{s_1-2} & \rvline &-y  &\\
\hline
\zerovector_{s_1-2} & \rvline & -I_{s_1-2} &\rvline & \zerovector_{s_1-2} &\\
\hline
s_1  & \rvline &  \zerovector^T_{s_1-2} & \rvline & 2   &
\end{pmatrix}
\end{equation*}
Inside this matrix, the copy of 
$-I_{s_1-2}$ in the middle rows and columns has no other nonzero entries anywhere in the same rows or columns.  Deleting those rows and columns, one obtain this $2 \times 2$ matrix $N$ with isomorphic cokernel, and
$s_1-2$ fewer copies of one in its Smith normal form:
\begin{equation*}
N:=\begin{pmatrix}
-z & -y  \\
s_1  & 2   
\end{pmatrix}
\end{equation*}
It remains to show that $N$ has Smith normal form either 
$$
\begin{cases}
  \diag(1,a(p)) & \text{ if }s_1\text{ or }s_2\text{ is odd},\\  
  \diag(2,\frac{a(p)}{2}) & \text{ if }s_1,s_2\text{ are both even}.\
\end{cases}
$$

\vskip.1in
\noindent
{\sf Case 2a.} One of $s_1$ or $s_2$ is odd.\\

By the symmetry of $s_1,s_2$ in the definition of $T(p,s_1,s_2)$, we may assume without loss of generality that $s_1$ is odd. As $N$ has
both $2,s_1$ as entries, the GCD of its entries is $1$, so its Smith form is
$$
\diag(1,\det(N)) = \diag(1,a(p)).
$$

\vskip.1in
\noindent
{\sf Case 2b.} Both 
 $s_1, s_2$ are even.\\
 
Recall from \eqref{definition-for-y} and \eqref{definition-for-z} that
 \begin{align*}
z &= (4s_1 + 2s_2 + s_1s_2+ 8)F_{2p-2}+(-2s_1 - s_2 - 8)F_{2p-4}+ 2F_{2p-6} \\
% &=2(df+e)-s_2d\\
y  &= (s_2 + 4)F_{2p-2} - 2F_{2p-4}.\notag\\
\end{align*}
From this it follows that $y,z$ are both even.  Hence $N$ contains all even entries, and has $2$ as an entry, so its Smith normal form is 
$$
\diag\left(2,\frac{\det(N)}{2}\right) 
= \diag\left(2,\frac{a(p)}{2}\right).
$$
 as desired.

\end{enumerate}

\section{Proof of Theorem \ref{comb-tree-theorem}}

As mentioned in the introduction, we will consider the rooted plane binary tree $T_p$ with $n:=2p-1$ vertices that is a {\it left comb}, meaning that every right child is a leaf.  Thus $T_p$ has $p$ vertices $\pi_1,\ldots,\pi_p$ on the spine, and $p-1$ right-children $\ell_1,\ldots,\ell_{p-1}$ of $\pi_2,\ldots,\pi_p$, respectively.  It has $p$ leaves in total since $\pi_1$ is also a leaf.
See $T_6$ below.

%\begin{figure}[H]
\begin{center}
\begin{tikzpicture}[scale=.4]

     \tikzstyle{every node}=[draw,circle,minimum size=1pt,inner sep=.5pt]
    
    %\node[color=red](v0) at (2,2){$v_0$};
    \node(v1) at (0,0){$\pi_1$};
    \node[](v2) at (1,1.5){$\pi_2$};
    \node[](v3) at (2,3){$\pi_3$};
    \node[](v4) at (3,4.5){$\pi_4$};
    \node[](v5) at (4,6){$\pi_5$};
    \node[](v6) at (5,7.5){$\pi_6$};
    \node[](v7) at (2,0){$\ell_1$};
    \node[](v8) at (3,1.5){$\ell_2$};
    \node[](v9) at (4,3){$\ell_3$};
    \node[](v10) at (5,4.5){$\ell_4$};
    \node[](v11) at(6,5.5){$\ell_5$};
    
    \draw(v1)--(v2)--(v3)--(v4);
    \draw (v4)--(v5)--(v6);
    %\draw(v0)--(v1);
    \draw(v2)--(v7);
    \draw(v3)--(v8);
    \draw(v4)--(v9);
    \draw(v5)--(v10);
    \draw(v6)--(v11);

  \tikzstyle{every node}=[]

  \end{tikzpicture}
\end{center}

\noindent
\textbf{Theorem} \ref{comb-tree-theorem}. 
    For any $p \geq 2$ one has $\mu(\Cone(T_p)) = 1$, but $\ell(T_p)=p$.
\begin{proof}
    Deleting the row and column of the cone vertex,  $\Cone(T_p)$ has this reduced Laplacian, $\overline{L}_{\Cone(T_p)}$:

\begin{equation}
\label{comb_laplacian}
\begin{blockarray}{}
\overline{L}_{\Cone(T_p)} = \bordermatrix{
~ & \bar{e}_{\pi_{1}} & \bar{e}_{\pi_{2}} & \bar{e}_{\pi_{3}} &  \cdots&\bar{e}_{\pi_{p}} & &\bar{e}_{\ell_{1}}& \bar{e}_{\ell_{2}} & \bar{e}_{\ell_{3}}&\cdots &\bar{e}_{\ell_{p-1}} \cr
~ & & &  &  &  & &  \cr
\bar{e}_{\pi_{1}}& 2   &  -1 &0     &\cdots& 0 &\rvline&0 & & \cdots &&0&\cr
\cline{8-12}
\bar{e}_{\pi_{2}}  & -1& 4 &-1  & \ddots   &\vdots & \rvline &&&\cr
\bar{e}_{\pi_{3}} &0 &   -1   & \ddots& \ddots &  0   &\rvline&&  & -I_{p-1}&\cr
\vdots & \vdots & \ddots&\ddots  & 4 & -1    &\rvline&  & \cr 
\bar{e}_{\pi_{p}} & 0 & \cdots   &0 & -1 & 3&\rvline& &\cr
\cline{2-12}
\bar{e}_{\ell_{1}} & &   &    &  &     &\rvline& & & \cr
\bar{e}_{\ell_{2}} &0  &   &    &  &     &\rvline& & \cr
\bar{e}_{\ell_{3}} & \vdots &    &-I_{p-1}     &  &     &\rvline& & &2I_{p-1}\cr
\vdots       & &  &      &  &     &\rvline&  &&  \cr
\bar{e}_{\ell_{p-1}} &0  &     &       &  &    &\rvline& &\cr
}
\end{blockarray}
\end{equation}

\noindent
Its rows and columns are indexed by the images of the standard basis vectors corresponding to $\pi_1,\ldots,\pi_p,\ell_1,\ldots,\ell_{p-1}$.
Recall that  
$
K(G):= \coker (\overline{L_G})\cong \mathbb{Z}^{n-1} / \im(\overline{L_G}) \cong \bigoplus_{i=1}^{n-1} \mathbb{Z} / \lambda_i\mathbb{Z}
$
where $\lambda_i$ are the diagonal entries in the Smith normal form of $\overline{L_G}$, so that $\lambda_i$ divides $\lambda_{i+1}$. Furthermore, if we denote by $\Delta_j$ the 
greatest common divisor of the determinants of all $(j \times j)-$minors of $L_G,$ then 
$
\Delta_j=\lambda_1 \lambda_2 \cdots 
\lambda_{j}.
$
%\frac{\Delta_{j}}{\Delta_{j-1}}.$  
Thus to show $K(\Cone(T_p))$ is cyclic, it suffices to exhibit two $(n-1) \times (n-1)$ minors whose gcd is 1,
since it implies $\lambda_1=\lambda_2=\cdots=\lambda_{n-1}=1$. This is provided by the following two claims. 
\vskip.1in
\noindent
{\sf Claim 1:} The minor of $\overline{L}_{\Cone(T_p)}$ obtained by deleting the first row and first column is odd.
\vskip.1in
\noindent
{\sf Claim 2:} The minor of $\overline{L}_{\Cone(T_p)}$ obtained by deleting the first row and last column is $\pm 2^{p-2}$. \\

\noindent {\sf Proof of Claim 1:} To show that the minor obtained by deleting the first row and first column of $\overline{L}_{\Cone(T_p)}$ is odd, we can reduce its entries modulo $2$, giving this result, which is easily seen to have determinant $1 \bmod{2}:$

\small
\begin{equation}
%\label{eq:promising-three-block-matrix}
\begin{pmatrix}
0&1 &0 &  \cdots &0& 0  &\rvline &\\
1&0 &1 & \cdots & 0& 0   &\rvline& \\
0 &1 &0 & & 0& 0  &\rvline& \\
\vdots & & &\ddots & & \vdots  &\rvline & I_{p-1} \\
0&0&0& \cdots &0& 1   &\rvline& \\
0&0&0 &\cdots &1 & 1    &\rvline& \\\hline
 & & &I_{p-1} & & &\rvline & \mathbf{0}_{p-1} 
\end{pmatrix}.
\end{equation}

\normalsize

%\vspace{1cm}
\noindent {\sf Proof of Claim 2;} Deleting the first row and last column of $\overline{L}_{\Cone(T_p)}$ gives this $(2p-2)\times (2p-2)$ matrix:

\begin{equation}\label{M'}
\begin{matrix}
 \bordermatrix{
~ & \bar{e}_{\pi_{1}} & \bar{e}_{\pi_{2}} & \bar{e}_{\pi_{3}} & \bar{e}_{\pi_{4}} &\cdots&\bar{e}_{\pi_{p}} &&\bar{e}_{\ell_{1}}& \bar{e}_{\ell_{2}} & \bar{e}_{\ell_{3}}&\cdots &\bar{e}_{\ell_{p-2}} \cr
~ & & &  &  &  & &  \cr
\bar{e}_{\pi_{2}}& -1   &  4 &-1 &0    &\cdots& 0 &\rvline\cr
\bar{e}_{\pi_{3}}  & 0& -1 &4  & \ddots   &\ddots& \vdots& \rvline &&&\cr
\bar{e}_{\pi_{4}} &0 &   \ddots   & \ddots& \ddots &  -1 & 0 &\rvline&&  & -I_{p-2}&\cr
\vdots & \vdots & \ddots&\ddots  & \ddots & 4   & -1&\rvline&  & \cr 
\bar{e}_{\pi_{p}} & \vdots & \cdots   &\cdots & 0 & -1& 3&\rvline&0 & \cdots &&\cdots&0\cr
\cline{2-13}
\bar{e}_{\ell_{1}} &\vdots  &   &    &  &     &&\rvline& & & \cr
\bar{e}_{\ell_{2}} &0  &   &    &  &     &&\rvline& & \cr
\bar{e}_{\ell_{3}} & \vdots &    &-I_{p-1}     &  &     &&\rvline& & &2I_{p-2}\cr
\vdots       &\vdots &  &      &  &     &&\rvline&  &&  \cr
\bar{e}_{\ell_{p-1}} &0  &     &       &  &    &&\rvline&0 & \cdots &&\cdots&0\cr
}
\end{matrix}
\end{equation}
\normalsize

\noindent
One can reorder the rows and columns of \eqref{M'} to obtain an upper triangular matrix, illustrated here for $T_6$: 

\small
\begin{equation*}
\bordermatrix{
~ & \bar{e}_{\pi_{1}} & \bar{e}_{\ell_{1}} & \bar{e}_{\pi_{2}} & \bar{e}_{\ell_{2}} & \bar{e}_{\pi_{3}} & \bar{e}_{\ell_{3}}& \bar{e}_{\pi_{4}} & \bar{e}_{\ell_{4}}& \bar{e}_{\pi_{5}} & \bar{e}_{\pi_{6}} \cr
\bar{e}_{\pi_{2}}&-1 &-1 &4 &0 &-1 &0 & & & &  \cr
\bar{e}_{\ell_{1}}& 0& 2& -1& 0& 0&0 & & & &  \cr
\bar{e}_{\pi_{3}} & & &-1 &-1 &4 &0 &-1 &0 & &  \cr
\bar{e}_{\ell_{2}}& & & 0& 2& -1& 0& 0&0 & &  \cr
\bar{e}_{\pi_{4}} & & &  & &-1 &-1 &4 &0 &-1 &0  \cr
\bar{e}_{\ell_{3}}& & & & & 0& 2& -1& 0& 0&0  \cr
\bar{e}_{\pi_{5}} & & & & &  & &-1 &-1 &4 &0  \cr
\bar{e}_{\ell_{4}}& & & & & & & 0& 2& -1& 0  \cr
\bar{e}_{\pi_{6}} & & & & & & &  & &-1 &3  \cr
\bar{e}_{\ell_{5}}& & & & & & & & & 0& -1&  \cr
}
\end{equation*}
\vspace{.3cm}
\normalsize
\noindent

Here is its general structure for $T_p$:
\vspace{.5cm}
\tiny
\begin{equation*}
\begin{blockarray}{}
 \bordermatrix{
~ & \bar{e}_{\pi_{1}} & \bar{e}_{\ell_{1}} & &\bar{e}_{\pi_{2}} & \bar{e}_{\ell_{2}}  & &\bar{e}_{\pi_{3}}&\bar{e}_{\ell_{3}}&&\bar{e}_{\pi_{4}}& \bar{e}_{\ell_{4}} & &\cdots&\cdots& &\cdots&\cdots& &\bar{e}_{\pi_{p-2}}&\bar{e}_{\ell_{p-2}}& &\bar{e}_{\pi_{p-1}}&\bar{e}_{\pi_{p}}\cr
~ & & &  &  &  & &\cr
\bar{e}_{\pi_{2}}& -1   &  -1 &\rvline&\phantom{-}4 &0 &\rvline&-1&0&\rvline&&&\rvline&&&\rvline&&&\rvline&&&\rvline&\cr
\bar{e}_{\ell_{1}}  & \phantom{-}0& \phantom{-}2 &\rvline&-1 & 0  &\rvline&\phantom{-}0 &0 &\rvline& &&\rvline&&&\rvline&&&\rvline&&&\rvline&\cr
\cline{2-24}
\bar{e}_{\pi_{3}} & &  &\rvline& -1&  -1 &\rvline&\phantom{-}4&\phantom{-}0&\rvline&-1&0  &\rvline&&&\rvline&&&\rvline&&&\rvline&\cr
\bar{e}_{\ell_{2}} &    &   &\rvline&\phantom{-}0 &\phantom{-}2 &\rvline&-1&\phantom{-}0 &\rvline&\phantom{-}0&0&\rvline&&&\rvline&&&\rvline&&&\rvline&\cr
\cline{2-24}
\bar{e}_{\pi_{4}} & &  &\rvline&&&\rvline&-1 & -1  &\rvline&\phantom{-}4&\phantom{-}0&\rvline&-1& \phantom{-}0&\rvline&&&\rvline&&&\rvline& \cr
\bar{e}_{\ell_{3}} &   &   &\rvline&&&\rvline&\phantom{-}0 & \phantom{-}2&\rvline&-1&\phantom{-}0 &\rvline&\phantom{-}0&\phantom{-}0&\rvline&&&\rvline&&&\rvline&\cr
\cline{2-24}
\bar{e}_{\pi_{5}} &  &  &\rvline& &   &\rvline& & &\rvline&   \ddots &\ddots&\rvline&\ddots & \ddots&\rvline&\ddots&\ddots&\rvline&&&\rvline&\cr
\bar{e}_{\ell_{4}} &  &   &\rvline&&  &\rvline&&&\rvline& \ddots& \ddots&\rvline&\ddots&\ddots&\rvline&\ddots&\ddots&\rvline&&&\rvline&\cr
\cline{2-24}
\vdots       & &      &\rvline& & &\rvline&  &&\rvline&&  &\rvline&\ddots&\ddots&\rvline&\ddots&\ddots&\rvline&\ddots&\ddots&\rvline&\cr
\vdots &  &     &\rvline&     &  &\rvline&&  &\rvline&&&\rvline&\ddots&\ddots&\rvline&\ddots&\ddots&\rvline&\ddots&\ddots&\rvline&\cr
\cline{2-24}
\vdots      & &      &\rvline& & &\rvline&  &&\rvline&&  &\rvline&&&\rvline&-1&-1&\rvline&\phantom{-}4&0&\rvline&-1&0\cr
\vdots &  &     &\rvline&     &  &\rvline&&  &\rvline&&&\rvline&&&\rvline&\phantom{-}0&\phantom{-}2&\rvline&-1&0&\rvline&\phantom{-}0&0\cr
\cline{2-24}
\bar{e}_{\pi_{p-1}}      & &      &\rvline& & &\rvline&  &&\rvline&&  &\rvline&&&\rvline&&&\rvline&-1&-1&\rvline&\phantom{-}4&-1\cr
\bar{e}_{\ell_{p-2}} &  &     &\rvline&     &  &\rvline&&  &\rvline&&&\rvline&&&\rvline&&&\rvline&\phantom{-}0&\phantom{-}2&\rvline&-1&\phantom{-}0\cr
\cline{2-24}
\bar{e}_{\pi_{p}}       & &      &\rvline& & &\rvline&  &&\rvline&&  &\rvline&&&\rvline&&&\rvline&&&\rvline&-1&\phantom{-}3\cr
\bar{e}_{\ell_{p-1}} &  &     &\rvline&     &  &\rvline&&  &\rvline&&&\rvline&&&\rvline&&&\rvline&&&\rvline&\phantom{-}0&-1&\cr
}
\end{blockarray}
\end{equation*}

\normalsize
Note that its determinant, which is the product of the diagonal entries, is $\pm 2^{p-2}$, proving Claim 2.
\end{proof}
 \newpage
\bibliographystyle{alpha}
\bibliography{ref}
    
\end{document}

%% file: drawing1.tex
    \begin{tikzpicture}

    \tikzstyle{every node}=[draw,circle,fill=white,minimum size=4pt,inner sep=2pt]
    \node[](v1) at (0,0){};
    \node[](v2) at (1,0){};
    \node[](v3) at (2,0){};
    \node[](v4) at (3,0){};
    \node[](v5) at (4,0){};
    \node[](v6) at (5,.75){};
    \node[](v7) at (5,-.75){};
    \node[](v8) at (5.2,-.5){};
    \node[](v9) at (5.2,.5){};
    
    \draw(v1)--(v2)--(v3)--(v4);
    
    \draw (v4)--(v5)--(v6);
    \draw(v9)--(v5)--(v7);
    \draw(v5)--(v8);
  
     \def\C{(5.75,1)} 
     \def\D{(5.75,-1)} 
     \def\E{(0,-.75)} 
     \def\F{(4,-.75)}

  \tikzstyle{every node}=[]
    
  \coordinate (A) at (6,0);
   \coordinate (B) at (2,-1);
  
 \draw[] (A) circle (0);

  \end{tikzpicture}

%% file: drawing2.tex
%    \begin{subfigure}
         % \resizebox{2in}{!}{
    \begin{tikzpicture}
    %\node at (-.9,.1){$\vdots$};
    %\node at (5.2,.1){$\vdots$};
    %\node[]at (2.52,0){$\cdots$};
    \tikzstyle{every node}=[draw,circle,fill=white,minimum size=4pt,inner sep=2pt]
    \node[](v1) at (0,0){};
    \node[](v2) at (1,0){};
    \node[](v3) at (2,0){};
    \node[](v4) at (3,0){};
    \node[](v5) at (4,0){};
    \node[](v6) at (5,.75){};
    \node[](v7) at (5,-.75){};
    \node[](v8) at (5.2,-.5){};
    \node[](v9) at (5.2,.5){};
    %\node[](v10) at (-1.2,.5){};
     \node[](v11) at (-1.0,.75){};
     \node[](v12) at (-.8,0){};
     \node[](v13) at (-1.0,.-.75){};
    
    \draw(v1)--(v2)--(v3)--(v4);
    
    \draw (v4)--(v5)--(v6);
    \draw(v9)--(v5)--(v7);
    \draw(v5)--(v8);
   % \draw(v10)--(v1);
    \draw(v11)--(v1);
    \draw(v12)--(v1);
    \draw(v13)--(v1);
     \def\C{(5.75,1)} 
     \def\D{(5.75,-1)} 
     \def\E{(0,-.75)} 
     \def\F{(4,-.75)} 
%  \draw[decorate,decoration={brace,mirror}] \D -- \C;
%\draw[decorate,decoration={brace,mirror}] \E -- \F;

  \tikzstyle{every node}=[]
    
  \coordinate (A) at (6,0);
   \coordinate (B) at (2,-1);
  
 \draw[] (A) circle (0);
 %\node[xshift=.65cm] at (A) {$s$ vertices};
 %\node[yshift=-.25cm] at (B) {$p$ vertices};

  \end{tikzpicture}
    % }

%\end{subfigure}
  

%% file: drawing3.tex
% \begin{subfigure}

\begin{tikzpicture}
  [scale=.4,auto=left,every node/.style={circle,fill=black!20}]

  %%%%%%
    \node (s7) at (-13,4) {$\sigma_{(1,2)}$};
   \node (s6) at (-10,4) {$\sigma_{(1,1)}$};
  \node (s8) at (-14,0) {$\sigma_{(1,3)}$};
  \node (s9) at (-13,-4) {$\sigma_{(1,4)}$};
  \node (s10) at (-10,-4) {$\sigma_{(1,5)}$};
 \node (p1) at (-10,0) {$\pi_1$};
  \node (p2) at (-7,0) {$\pi_2$};
  \node (p3) at (-4,0) {$\pi_3$};
  \node (p4) at (-1,0) {$\pi_4$};
  \node (p5) at (2,0) {$\pi_5$};
  \node (p6) at (5,0) {$\pi_6$};
  \node (s1) at (5,4) {$\sigma_{(2,1)}$};
  \node (s2) at (8,2.5) {$\sigma_{(2,2)}$};
  %\node (s3) at (9,0) {$\sigma_{2,3}$}; 
  \node (s4) at (8,-2.5) {$\sigma_{(2,3)}$}; 
  \node (s5) at (5,-4) {$\sigma_{(2,4)}$};
 
  \foreach \from/\to in {s6/p1,s7/p1,s8/p1,s9/p1,s10/p1,p6/p5,p5/p4,p4/p3,p3/p2,p2/p1,p6/s1,p6/s2,p6/s4,p6/s5}
    \draw (\from) -- (\to);
 
\end{tikzpicture}

%     \end{subfigure}
  